\documentclass[10pt]{article}

\usepackage{geometry}
\geometry{verbose,tmargin=3.5cm,bmargin=3.5cm,lmargin=3.5cm,rmargin=3.5cm}
\usepackage{color}
\usepackage{float}
\usepackage{textcomp}
\usepackage{amsthm}
\usepackage{amsmath}
\usepackage{amssymb}
\usepackage[T1]{fontenc}
\usepackage{lipsum}
\usepackage{multirow}
\usepackage{multicol}      

\usepackage{changes}

\usepackage{mathtools}

%

\marginparwidth 0pt
\oddsidemargin  0pt
\evensidemargin  0pt
\marginparsep 0pt

\topmargin   -.5in
\hoffset -0.1in
\textwidth   6.6in
\textheight  8.6 in

\newtheorem{theorem}{Theorem}[section]
\newtheorem{lemma}[theorem]{Lemma}

\newtheorem{proposition}[theorem]{Proposition}
\newtheorem{remark}[theorem]{Remark}

\newcommand{\R}{\mathbb{R}}

\newcommand{\inner}[2]{\langle{#1},{#2}\rangle}
\newcommand{\Inner}[2]{\left\langle{#1},{#2}\right\rangle}
\newcommand{\norm}[1]{\|#1\|}
\newcommand{\Norm}[1]{\left\|#1\right\|}

\newcommand{\tos}{\rightrightarrows} 
\newcommand{\Y}{\mathcal{Y}}
\newcommand{\X}{\mathcal{X}}
\newcommand{\Z}{\mathcal{Z}}

\newcommand{\V}{\mathcal{V}}

\newcommand{\vgap}{\vspace{.1in}}

\newcommand{\tz}{\tilde z}

\newcommand{\bi}{\begin{itemize}}
\newcommand{\ei}{\end{itemize}}
\newcommand{\ba}{\begin{array}}
\newcommand{\ea}{\end{array}}


\begin{document}

\title{ A Partially Inexact Alternating Direction Method of Multipliers and its Iteration-Complexity Analysis}

\author{ Vando A. Adona \thanks{
              IME, Universidade Federal de Goi\'as, Goi\^ania, GO 74001-970, BR.
              (E-mails: {\tt vandoadona@gmail.com},  {\tt maxlng@ufg.br}   and {\tt jefferson@ufg.br}).  
              The work of these authors was supported in part by CAPES,  CNPq Grants 302666/2017-6 and 406975/2016-7.}    
  \and
      Max L.N. Gon\c calves \footnotemark[1]
    \and
Jefferson G. Melo \footnotemark[1]
}

\date{May 17,2018}

\maketitle

\begin{abstract}
This paper proposes  a partially inexact  alternating direction method of multipliers  for computing approximate solution of a linearly constrained convex optimization problem. 
This  method allows its first subproblem  to be solved inexactly using a relative approximate criterion, whereas a proximal term is added to its second subproblem in order to simplify it.
A stepsize parameter is included  in the updating rule of the Lagrangian multiplier  to 
 improve its computational  performance. 
 Pointwise and ergodic interation-complexity bounds for the proposed method are established.
To the best of our knowledge, this is the first time that complexity results for  an inexact ADMM with relative error criteria has been analyzed.
 Some preliminary numerical experiments are  reported  to illustrate the advantages of the new method.
\\
  \\
  2000 Mathematics Subject Classification: 
47H05, 49M27, 90C25, 90C60,  65K10.
\\
\\   
Key words: alternating direction method of multipliers,  relative error criterion,  hybrid extragradient method,  convex program, pointwise iteration-complexity, ergodic iteration-complexity.
\end{abstract}
\section{Introduction} \label{sec:int}
 In this paper, we  propose and analyze a partially inexact alternating direction method of multipliers (ADMM) for  computing approximate solutions of  a linearly constrained convex optimization problem.
Recently, there  has been  some growing interest in the ADMM \cite{0352.65034,0368.65053}  due to its  efficiency for solving the aforementioned class of problems; see, for instance,  \cite{Boyd:2011} for a complete review.


 Many variants of the ADMM have been studied  in the literature. 
Some of these variants included  proximal terms in the subproblems of the ADMM in order to make them  easier to solve or even to have  closed-form solutions.
Others  added a stepsize parameter in the Lagrangian multiplier updating  to improve the performance of the method; see, for example,  \cite{attouch:hal,Xu2007,HeLinear,He2015,Deng1,MJR2,He2002,Cui,Gu2015}  for papers in which  one or both of the above two strategies are used. 
 Other works  focused on studying inexact versions of the ADMM with different error conditions; for instance, 
\cite{Eckstein2017App,Eckstein2017Relat,Xie2017} analyzed variants   whose   subproblems are solved inexactly  
using  relative error criteria. 
 Summable error conditions were also considered in \cite{Eckstein2017App,MR1168183}; however,  it was observed in  \cite{Eckstein2017App}  that, in general, relative error conditions are more interesting from a computational viewpoint. The aforementioned relative error criteria were derived from the one considered in \cite{EcksteinSilva2013} to study inexact augmented Lagrangian method. The latter work, on the other hand, was motivated by \cite{Sol-Sv:hy.ext,Sol-Sv:hy.proj}, where the authors proposed inexact proximal-point type methods based on relative error criteria.


  The contributions of this paper are threefold: 
  \begin{enumerate}
  \item[(1)] to propose  an ADMM variant which combines three of the aforementioned  strategies. Namely, (i) the first subproblem of the method  is allowed to be  solved inexactly in such a way that  a relative approximate criterion is satisfied; (ii)  a general proximal term is  added into the second subproblem; (iii) a stepsize parameter is included  in the updating rule of the Lagrangian multiplier;
  \item[2)]  to  provide   pointwise and ergodic iteration-complexity bounds for the proposed method;
   \item[3)]  to illustrate, by means of numerical experiments, the efficiency of the new method for solving some real-life applications.

  \end{enumerate}

   
 Iteration-complexity results have been considered in the literature for most of exact ADMM variants. Paper \cite{monteiro2010iteration} presented  an ergodic iteration-complexity analysis of the  ADMM. Subsequently,  \cite{HeLinear} and \cite{He2015} analyzed  ergodic and pointwise iteration-complexities of a partially proximal ADMM, respectively. We refer the reader to \cite{adona2017iteration,goncalves2017pointwise,MJR,Goncalves2018,Bot2017,Hager,GADMM2015,teboulle2014} where iteration-complexities of other ADMM variants have been considered. The complexity analyses of the present paper are based on  showing that  the proposed method falls within   the setting of a hybrid proximal extragradient framework whose iteration-complexity bounds were established in \cite{MJR2}.
To the best of our knowledge, this work is the first one to present  iteration-complexity results for an inexact ADMM with relative error. 

This paper is organized as follows.  Section~\ref{sec:basandHPE} contains some preliminary results and it is divided into two subsections. The first subsection presents our notation and basic definitions while the second one recalls a modified HPE framework and its basic iteration-complexity results. Section~\ref{subsec:Admm1} introduces the partially inexact proximal ADMM and establishes its iteration-complexity bounds. Section~\ref{sec:exp} is devoted to the numerical experiments.

\section{Preliminary Results} \label{sec:basandHPE}
This section is divided into two subsections. The first one  presents our notation and basic results. The second subsection  recalls a modified  HPE framework and  its iteration-complexity bounds.

\subsection{Notation and Basic Definitions} \label{sec:bas}

This section presents some definitions, notation and basic results used in this paper.

The $p-$norm ($p\geq1$) and maximum norm of $z\in\R^n$ are denoted, respectively, by $\|z\|_p=\left(\sum_{i=1}^{n}|z_i|^p\right)^{1/p}$ 
and $\|z\|_{\infty}=\max\{|z_1|,\dots,|z_n|\}$, when $p=2$, we omit the indice $p$. 
Let  $\V$ be a finite-dimensional  
real vector space with inner product and associated norm denoted by $\inner{\cdot}{\cdot}$ and $\|\cdot\|$, respectively.
For a given  self-adjoint positive semidefinite linear operator $Q:\V\to \V$, 
the seminorm induced by $Q$ on $\V$ is defined by $\|\cdot\|_{Q}= \langle Q (\cdot), \cdot\rangle ^{1/2}$.
Since $\langle Q (\cdot), \cdot\rangle$ is symmetric and bilinear, for all $v,\tilde{v}\in\V$, we have 
\begin{equation}\label{fact}
2\left\langle Qv,\tilde{v}\right\rangle\leq \norm{v}_{Q}^{2}+\norm{\tilde{v}}_{Q}^{2},\qquad  \|v+v'\|_{Q}^2\leq 2\left(\|v\|_{Q}^2+\|v'\|_{Q}^2\right).
\end{equation}
Given a set-valued operator $T:\V\tos \V$, its domain and graph  are defined, respectively, as
\[  
\mbox{Dom}\,T=\{v\in \V\,:\, T(v)\neq \emptyset\} \qquad \mbox{and}\qquad  
Gr(T)=\{ (v,\tilde{v})\in \V\times \V\;|\; \tilde{v} \in T(v)\}.
\]
The operator $T$ is said to be   monotone iff 
\[
\inner{u-v}{\tilde{u}-\tilde{v}}\geq 0\qquad \forall \;  (u,\tilde{u}),\, (v,\tilde{v}) \,\in \, Gr(T).
\]
Moreover, $T$ is maximal monotone iff it is monotone and there is no other monotone operator $S$ such that $Gr(T)\subset Gr(S)$.
Given a scalar $\varepsilon\geq0$, the 
 {$\varepsilon$-enlargement} $T^{[\varepsilon]}:\V\tos \V$
 of a monotone operator $T:\V\tos \V$ is defined as
\begin{align}
\label{eq:def.eps}
 T^{[\varepsilon]}(v)
 =\{\tilde{v}\in \V\,:\,\inner{\tilde{v}-\tilde{u}}{v-u}\geq -\varepsilon,\;\;\forall (u,\tilde{u})\in Gr(T)\} \quad \forall\, v \in \V.
\end{align}
The 
{$\varepsilon$-subdifferential} of a 
 proper closed convex function $f:\V\to  [-\infty,\infty]$
is defined by
\[
\partial_{\varepsilon}f(v)=\{u\in \V\,:\,f(\tilde{v})\geq f(v)+\inner{u}{\tilde{v}-v}-\varepsilon,\;\;\forall \,\tilde{v}\in \V\} \qquad\forall\, v\in \V.
\]
When $\varepsilon=0$, then $\partial_0 f(v)$ 
is denoted by $\partial f(v)$
and is called the {subdifferential} of $f$ at $v$.
It is well-known that the subdifferential  operator  of a proper closed convex function is maximal monotone~\cite{Rockafellar}.

The next result is a consequence of the transportation formula in \cite[Theorem 2.3]{Bu-Sag-Sv:teps1} combined with 
\cite[Proposition 2(i)]{Bu-Iu-Sv:teps}.

\begin{theorem}\label{for:trans}
Suppose $T:\V\tos \V$ is maximal monotone and let $\tilde{v}_i, v_i \in\V$, for $i = 1,\cdots,k$, be
such that $v_i \in T(\tilde{v}_i)$ and define
\[
\tilde{v}_{k}^{a}=\frac{1}{k}\sum_{i=1}^{k}\tilde{v}_i, \qquad v_{k}^{a}=\frac{1}{k}\sum_{i=1}^{k}v_i,\qquad 
\varepsilon_{k}^{a}=\frac{1}{k}\sum_{i=1}^{k}\inner{v_i}{\tilde{v}_i-\tilde{v}_{k}^{a}}.
\]
Then, the following hold:
\begin{itemize}
\item[(a)] $\varepsilon_{k}^{a}\geq 0$ and $v_{k}^{a}\in T^{[\varepsilon_{k}^{a}]}(\tilde{v}_{k}^{a})$;
\item[(b)] if, in addition, $T = \partial f$ for a proper closed and convex function $f$, then 
$v_{k}^{a}\in \partial_{\varepsilon_{k}^{a}}f(\tilde{v}_{k}^{a})$.
\end{itemize}
\end{theorem}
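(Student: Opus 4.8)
The plan is to establish part (a) directly from the definition of the $\varepsilon$-enlargement, and then to obtain part (b) as a specialization, using the fact that for $T = \partial f$ one has the sharper inclusion $\partial f(v) \subseteq (\partial f)^{[\varepsilon]}(v) \cap \{u : u \in \partial_{\varepsilon'} f \text{ for suitable } \varepsilon'\}$; more precisely, I would invoke the transportation formula of \cite[Theorem 2.3]{Bu-Sag-Sv:teps1} for the subdifferential case and the enlargement characterization of \cite[Proposition 2(i)]{Bu-Iu-Sv:teps} to pass between $(\partial f)^{[\varepsilon]}$ and $\partial_\varepsilon f$. Since the theorem is explicitly stated as a consequence of those two references, the proof is essentially an assembly argument.

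First I would prove (a). Fix an arbitrary $(u,\tilde u) \in Gr(T)$; the goal is to show $\inner{v_k^a - \tilde u}{\tilde v_k^a - u} \geq -\varepsilon_k^a$. Expanding $v_k^a = \frac1k\sum_i v_i$ and $\tilde v_k^a = \frac1k \sum_i \tilde v_i$, I would write
\[
\inner{v_k^a - \tilde u}{\tilde v_k^a - u} = \frac1k \sum_{i=1}^k \inner{v_i - \tilde u}{\tilde v_k^a - u}.
\]
Then, for each $i$, I would split $\tilde v_k^a - u = (\tilde v_i - u) + (\tilde v_k^a - \tilde v_i)$ and use monotonicity of $T$ together with $v_i \in T(\tilde v_i)$ to get $\inner{v_i - \tilde u}{\tilde v_i - u} \geq 0$. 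Summing the leftover terms $\frac1k\sum_i \inner{v_i - \tilde u}{\tilde v_k^a - \tilde v_i}$ and observing that $\sum_i \inner{\tilde u}{\tilde v_k^a - \tilde v_i} = \inner{\tilde u}{k\tilde v_k^a - k \tilde v_k^a} = 0$, this reduces to $\frac1k \sum_i \inner{v_i}{\tilde v_k^a - \tilde v_i} = -\varepsilon_k^a$. Hence $\inner{v_k^a - \tilde u}{\tilde v_k^a - u} \geq -\varepsilon_k^a$, which is exactly the claim $v_k^a \in T^{[\varepsilon_k^a]}(\tilde v_k^a)$. To see $\varepsilon_k^a \geq 0$, I would apply the inclusion just proved at the admissible pair $(u,\tilde u) = (\tilde v_1, v_1) \in Gr(T)$ — or, more cleanly, note that convexity of $Gr(T)$'s ``monotone'' structure gives $\sum_{i<j}\inner{v_i - v_j}{\tilde v_i - \tilde v_j} \geq 0$, and a short computation shows $k\varepsilon_k^a = \sum_{i<j}\inner{v_i-v_j}{\tilde v_i - \tilde v_j}/k$-type rearrangement is nonnegative; alternatively $\varepsilon_k^a \geq 0$ follows because $v_k^a \in T^{[\varepsilon_k^a]}(\tilde v_k^a)$ forces $\varepsilon_k^a \geq 0$ whenever the enlargement is nonempty-valued in the monotone case. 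I expect the bookkeeping in this nonnegativity step to be the only slightly delicate point.

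For part (b), with $T = \partial f$ and $v_i \in \partial f(\tilde v_i)$, I would use the subgradient inequalities $f(w) \geq f(\tilde v_i) + \inner{v_i}{w - \tilde v_i}$ for all $w \in \V$. Multiplying each by $1/k$ and summing gives $f(w) \geq \frac1k\sum_i f(\tilde v_i) + \inner{v_k^a}{w} - \frac1k\sum_i \inner{v_i}{\tilde v_i}$. To turn this into a statement about $\partial_{\varepsilon_k^a} f(\tilde v_k^a)$, I would add and subtract $f(\tilde v_k^a)$ and $\inner{v_k^a}{\tilde v_k^a}$, rewrite the constant, and use convexity of $f$ (Jensen: $\frac1k\sum_i f(\tilde v_i) \geq f(\tilde v_k^a)$) only in the direction that helps; in fact the cleanest route is to verify directly that $f(w) \geq f(\tilde v_k^a) + \inner{v_k^a}{w - \tilde v_k^a} - \varepsilon_k^a$ for all $w$, where the slack $\varepsilon_k^a = \frac1k\sum_i \inner{v_i}{\tilde v_i - \tilde v_k^a}$ exactly absorbs the difference $\frac1k\sum_i f(\tilde v_i) + \inner{v_k^a}{\tilde v_k^a} - f(\tilde v_k^a) - \frac1k\sum_i\inner{v_i}{\tilde v_i}$ once one notes $\frac1k\sum_i \inner{v_i}{\tilde v_k^a} = \inner{v_k^a}{\tilde v_k^a}$ and applies $\frac1k\sum_i f(\tilde v_i)\ge f(\tilde v_k^a)$ with the remaining terms. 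This yields $v_k^a \in \partial_{\varepsilon_k^a} f(\tilde v_k^a)$, completing the proof. The main obstacle, such as it is, is purely organizational: keeping the three averaged quantities and the telescoping inner products aligned so the residual collapses precisely to $\varepsilon_k^a$; there is no analytical difficulty beyond monotonicity and the subgradient inequality.
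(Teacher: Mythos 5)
Your proof is correct, but it takes a different route from the paper: the paper gives no argument at all for this statement, simply citing the transportation formula of Burachik--Sagastiz\'abal--Svaiter and a proposition of Burachik--Iusem--Svaiter, whereas you reprove the result from scratch. Your computation for the inclusion in (a) -- splitting $\tilde v_k^a-u=(\tilde v_i-u)+(\tilde v_k^a-\tilde v_i)$, killing the $\tilde u$-terms by $\sum_i(\tilde v_k^a-\tilde v_i)=0$, and recognizing the leftover as $-\varepsilon_k^a$ -- is exactly the standard argument underlying the cited transportation formula, and your derivation of (b) via averaging the subgradient inequalities and absorbing the residual with Jensen's inequality $\tfrac1k\sum_i f(\tilde v_i)\ge f(\tilde v_k^a)$ is likewise the classical proof. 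What your version buys is self-containedness and transparency about where monotonicity and convexity enter; what the citation buys the authors is brevity. One caveat on your treatment of $\varepsilon_k^a\ge 0$: of the three justifications you float, only the middle one is sound. Evaluating the proved inclusion at $(u,\tilde u)=(\tilde v_1,v_1)$ yields $\inner{v_k^a-v_1}{\tilde v_k^a-\tilde v_1}\ge-\varepsilon_k^a$, whose left side has no a priori sign, so it does not give nonnegativity; and the claim that nonemptiness of $T^{[\varepsilon]}(\tilde v_k^a)$ forces $\varepsilon\ge 0$ requires $\tilde v_k^a$ to lie in (the closure of) $\Dom T$, which is not guaranteed since $\Dom T$ need not be convex. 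The correct argument is the identity
\begin{equation*}
k\,\varepsilon_k^a=\sum_{i=1}^k\inner{v_i-v_k^a}{\tilde v_i-\tilde v_k^a}=\frac{1}{2k}\sum_{i,j=1}^k\inner{v_i-v_j}{\tilde v_i-\tilde v_j}\ge 0,
\end{equation*}
each summand being nonnegative by monotonicity; you should commit to that and drop the other two alternatives.
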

\subsection
{A Modified HPE Framework} \label{sec:smhpe}


Our problem of interest in this section is the monotone inclusion problem 
\begin{align}\label{eq:inc.p}
 0\in T(z),
\end{align}
where $T:\Z\tos \Z$ is a maximal monotone operator and 
 $\Z$ is  a finite-dimensional real vector space.
 We  assume that  the solution set  of~\eqref{eq:inc.p}, denoted by $T^{-1}(0)$, is nonempty.

The modified HPE framework  for computing approximate solutions of \eqref{eq:inc.p} is formally described as follows.
This framework was first considered   in  \cite{MJR2} in a more general setting. 

\vgap
\vgap
\noindent
{\bf Modified HPE framework}
\\
Step 0. Let $z_0 \in \Z$, $\eta_0 \in \R_{+}$, $\sigma \in [0, 1[$ and a self-adjoint 
 positive semidefinite linear operator   $M: \Z \to \Z $ be given, and set $k=1$.
 \\
Step 1. Obtain $(z_k,\tilde{z}_k,\eta_k) \in \Z \times \Z \times \mathbb{R}_{+}$   such that 
\begin{align}\label{breg-cond1}
 M (z_{k-1}-z_{k})  \in T(\tz_k), \qquad    
 \|{\tz}_k-z_{k}\|^2_{M} +\eta_k &\leq \sigma \|{\tz}_k-z_{k-1}\|_{M}^{2}+\eta_{k-1}.
\end{align}
Step 2. Set $k\leftarrow k+1$ and go to step 1.
\noindent
\\
\begin{remark} (i) The modified HPE framework is a generalization of the proximal point method. Indeed, 
if $M=I$ and $\sigma=\eta_0 = 0$, then \eqref{breg-cond1}   implies that $\eta_k = 0$,
$z_k =\tilde{z}_k$ and $0\in z_k-z_{k-1} +  T({z}_k)$  for every $k\geq 1$, which corresponds to  the proximal point method to solve   problem \eqref{eq:inc.p}.
(ii)  In Section~\ref{subsec:Admm1}, we propose a partially inexact proximal ADMM and show that it  falls within  the modified HPE framework setting. In particular, it is specified how the triple  $(z_k,\tilde{z}_k, \eta_k)$ can be computed in this context.  It is worth mentioning that the 
 use  of a positive semidefinite operator $M$ instead of a positive definite is essential in the analysis of Section~\ref{subsec:Admm1} (see \eqref{def:matrixM1}).
More examples of algorithms which can be seen as special cases of  HPE-type frameworks can be found in \cite{Sol-Sv:hy.ext,monteiro2010iteration,monteiro2010complexity}. 
\end{remark}


We first present a  pointwise  iteration-complexity  bound for the modified HPE framework,
whose  proof can be found in  \cite[Theorem~2.2]{adona2017iteration} (see also \cite[Theorem~3.3]{MJR2}  for a more general result).

\begin{theorem}\label{th:pointwiseHPE}
Let $\{(z_k,\tilde z_k,\eta_k)\}$ be  generated by the modified HPE framework.
 Then, for every $k \ge 1$, we have $ M (z_{k-1}-z_{k})\in T(\tz_k)$ and  there exists $i\leq k$ such that
  \[
  \|z_{i-1}-z_{i}\|_M \leq
\frac{1}{\sqrt{k}} \sqrt{\frac{2(1+\sigma)d_0+4\eta_0}{1-\sigma}},
\]
where $d_{0} = \inf \{ \|z^*-z_{0}\|_{M}^{2} : z^* \in T^{-1}(0)\}$.
\end{theorem}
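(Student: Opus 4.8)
## Proof Proposal

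The plan is to extract a suitable telescoping inequality from the two conditions in \eqref{breg-cond1} and then apply a standard averaging argument to produce the index $i \le k$ with the claimed bound. First I would take any fixed solution $z^* \in T^{-1}(0)$, so that $0 \in T(z^*)$, and apply monotonicity of $T$ to the two pairs $(z^*, 0)$ and $(\tz_k, M(z_{k-1}-z_k))$ in $\mathrm{Gr}(T)$, which gives $\inner{M(z_{k-1}-z_k)}{\tz_k - z^*} \ge 0$. The next step is to rewrite this inner product using the identity $2\inner{M(z_{k-1}-z_k)}{\tz_k - z^*} = \|z_{k-1}-z^*\|_M^2 - \|z_k - z^*\|_M^2 + \|z_k - \tz_k\|_M^2 - \|z_{k-1} - \tz_k\|_M^2$, which is just the polarization/cosine-rule identity for the seminorm $\|\cdot\|_M$ (valid since $\inner{M\cdot}{\cdot}$ is symmetric bilinear). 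Combining this with the error condition $\|\tz_k - z_k\|_M^2 + \eta_k \le \sigma\|\tz_k - z_{k-1}\|_M^2 + \eta_{k-1}$ yields, after rearrangement,
\[
\|z_k - z^*\|_M^2 + 2\eta_k + (1-\sigma)\|\tz_k - z_{k-1}\|_M^2 \;\le\; \|z_{k-1}-z^*\|_M^2 + 2\eta_{k-1}.
\]

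Summing this telescoping inequality from $k=1$ to $k=N$ gives $\sum_{k=1}^N (1-\sigma)\|\tz_k - z_{k-1}\|_M^2 \le \|z_0 - z^*\|_M^2 + 2\eta_0$, so some index $j \le N$ satisfies $\|\tz_j - z_{j-1}\|_M^2 \le \frac{1}{N(1-\sigma)}(\|z_0-z^*\|_M^2 + 2\eta_0)$. Since $z^*$ was arbitrary, we may replace $\|z_0 - z^*\|_M^2$ by $d_0$. However, this controls $\|\tz_j - z_{j-1}\|_M$, not $\|z_{i-1}-z_i\|_M$, so one more step is needed: from the error condition (with $\eta_k \ge 0$) we get $\|z_j - \tz_j\|_M^2 \le \sigma\|\tz_j - z_{j-1}\|_M^2 + \eta_{j-1}$, and then by the triangle inequality in $\|\cdot\|_M$ together with \eqref{fact}, $\|z_{j-1}-z_j\|_M^2 \le 2\|z_{j-1}-\tz_j\|_M^2 + 2\|\tz_j - z_j\|_M^2$. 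I would feed these estimates into each other and also use that the $\eta_k$ are themselves controlled (the telescoped bound also gives $2\eta_N \le d_0 + 2\eta_0$, and more usefully one tracks $\eta_{j-1}$ against partial sums) to collapse everything into a single $1/\sqrt{k}$ bound with the constant $\sqrt{2(1+\sigma)d_0 + 4\eta_0}$ over $\sqrt{1-\sigma}$.

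The main obstacle I anticipate is the bookkeeping in the final step: passing from a bound on $\min_j \|\tz_j - z_{j-1}\|_M$ to a bound on $\min_i \|z_{i-1}-z_i\|_M$ while simultaneously absorbing the $\eta$ terms and chasing down the exact constant $\frac{2(1+\sigma)d_0 + 4\eta_0}{1-\sigma}$. The clean way is probably to prove directly, by the same telescoping, a bound of the form $\sum_{k=1}^N \big( (1-\sigma)\|\tz_k - z_{k-1}\|_M^2 \big) \le d_0 + 2\eta_0$ and separately note $\|z_{k-1}-z_k\|_M \le (1+\sqrt{\sigma})\|\tz_k - z_{k-1}\|_M$ plus an $\eta$-correction that is itself summable, so that a single index $i$ can be chosen to make $\|z_{i-1}-z_i\|_M$ small; matching the stated constant is then a matter of being slightly generous in the elementary inequalities (e.g.\ using $(1+\sqrt\sigma)^2 \le 2(1+\sigma)$). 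Since the excerpt explicitly says this theorem is proved in \cite[Theorem~2.2]{adona2017iteration} and \cite[Theorem~3.3]{MJR2}, I would in practice just cite those and present the telescoping identity as the conceptual heart of the argument.
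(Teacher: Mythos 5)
The paper itself offers no proof of Theorem~\ref{th:pointwiseHPE}; it only cites \cite[Theorem~2.2]{adona2017iteration} and \cite[Theorem~3.3]{MJR2}, so there is nothing in-document to compare against. Your telescoping plan is exactly the standard argument used in those references, and it does reproduce the stated constant, so the approach is right. Two bookkeeping corrections. First, the intermediate inequality should carry $\eta_k$ and $\eta_{k-1}$ with coefficient $1$, not $2$: combining the cosine-rule identity with monotonicity and the error condition gives $\|z_k-z^*\|_M^2+\eta_k+(1-\sigma)\|\tz_k-z_{k-1}\|_M^2\le\|z_{k-1}-z^*\|_M^2+\eta_{k-1}$; your version with the factor $2$ would additionally require $\eta_k\le\eta_{k-1}$, which the framework does not guarantee (the error condition only gives $\eta_k\le\sigma\|\tz_k-z_{k-1}\|_M^2+\eta_{k-1}$).

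Second, the ``main obstacle'' you anticipate dissolves if you do not select the index $j$ before handling the $\eta$ terms. Instead, bound the full sum of the quantities you actually need: by \eqref{fact} and the error condition, $\|z_{k-1}-z_k\|_M^2\le 2\|z_{k-1}-\tz_k\|_M^2+2\|\tz_k-z_k\|_M^2\le 2(1+\sigma)\|\tz_k-z_{k-1}\|_M^2+2(\eta_{k-1}-\eta_k)$. Summing over $k=1,\dots,N$, using the telescoped bound $(1-\sigma)\sum_{k=1}^N\|\tz_k-z_{k-1}\|_M^2\le d_0+\eta_0$ (after taking the infimum over $z^*$) and telescoping the $\eta$ differences gives
\[
\sum_{k=1}^N\|z_{k-1}-z_k\|_M^2\le\frac{2(1+\sigma)(d_0+\eta_0)}{1-\sigma}+2\eta_0=\frac{2(1+\sigma)d_0+4\eta_0}{1-\sigma},
\]
and choosing $i$ as the minimizing index yields exactly the claimed bound --- no generosity in the elementary inequalities is needed, and the route through $(1+\sqrt{\sigma})^2\le 2(1+\sigma)$ plus a separate $\eta$-correction for a single index can be discarded. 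The inclusion $M(z_{k-1}-z_k)\in T(\tz_k)$ is of course immediate from Step~1 of the framework.
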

\begin{remark}\label{Rem:er} For a given tolerance $\bar \rho>0$, it follows from Theorem~\ref{th:pointwiseHPE} that in at most $\mathcal{O}(1/\bar\rho^2)$ iterations, the modified HPE framework computes an approximate solution $\tilde z$ of  \eqref{eq:inc.p} 
and a residual $r$ in the sense that $M r\in T(\tilde z)$ and $\|r\|_M\leq \bar \rho$.
Although $M$ is assumed to be only semidefinite positive, if $\|r\|_M=0$, then $M^{1/2} r =0$ which, in turn, implies  that $Mr=0$.
Hence, the latter inclusion implies that $\tilde z$ is a solution of problem~\eqref{eq:inc.p}. Therefore, the aforementioned concept of approximate solutions makes sense.
\end{remark}

We now state an ergodic  iteration-complexity  bound for the modified HPE framework,
whose  proof can be found in  \cite[Theorem~2.3]{adona2017iteration} (see also \cite[Theorem~3.4]{MJR2}  for a more general result).

\begin{theorem}\label{th:ergHPE}
Let $\{(z_k,\tilde z_k,\eta_k)\}$ be  generated by the modified HPE framework.
Consider the ergodic sequence $\{(\tilde z_k^a, r^a_{k},\varepsilon^a_{k})\}$ defined by
\begin{equation*}
 \tilde z^a_{k} = \frac{1}{k}\sum_{i=1}^k\tilde z_i,\quad r^a_{k} = \frac{1}{k}\sum_{i=1}^k(z_{i-1}-z_{i}), \quad
\varepsilon^a_{k} = \frac{1}{k} \sum_{i=1}^k\inner{M(z_{i-1}-z_{i})}{\tilde{z}_i -\tilde z^a_{k}},\quad \forall k\geq 1.
\end{equation*} 
Then, for every $k\geq 1$, there hold $\varepsilon^a_{k}\geq 0$, $Mr^a_k\in  T^{[\varepsilon_k^a]}(\tz^a_k)$ and
\begin{align*}
 \|r^a_k\|_M &\le \frac{2\sqrt{d_0+\eta_0}}{k}, \qquad
 \varepsilon^a_{k} \leq 
\frac{3(3-2\sigma) (d_{0} +\eta_0)}{2(1-\sigma)k},
\end{align*}
where $d_0$ is as defined in Theorem~\ref{th:pointwiseHPE}.
\end{theorem}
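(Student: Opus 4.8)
The plan is to derive everything from the two defining relations in \eqref{breg-cond1} together with the transportation formula (Theorem~\ref{for:trans}). First I would verify the claimed inclusion. Setting $v_i = M(z_{i-1}-z_i)$ and $\tilde v_i = \tilde z_i$, the inclusion $v_i \in T(\tilde z_i)$ is exactly the first part of \eqref{breg-cond1}. Then $r^a_k = \frac1k\sum_{i=1}^k (z_{i-1}-z_i)$ and the ergodic iterates $\tilde z^a_k$, $\varepsilon^a_k$ match the definitions in Theorem~\ref{for:trans} once one writes $\varepsilon^a_k = \frac1k\sum_i \inner{M(z_{i-1}-z_i)}{\tilde z_i - \tilde z^a_k}$ — note the symmetry/self-adjointness of $M$ is used here so that $\inner{v_i}{\tilde v_i - \tilde v^a_k}$ has the stated form. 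Applying Theorem~\ref{for:trans}(a) immediately yields $\varepsilon^a_k \ge 0$ and $M r^a_k = v^a_k \in T^{[\varepsilon^a_k]}(\tilde z^a_k)$.

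The quantitative bounds are the real content. The key auxiliary estimate is a telescoping consequence of the second relation in \eqref{breg-cond1}. Using $\|\tilde z_k - z_{k-1}\|_M^2 = \|\tilde z_k - z_k\|_M^2 + 2\inner{M(\tilde z_k - z_k)}{z_k - z_{k-1}} + \|z_k - z_{k-1}\|_M^2$ and rearranging, the inequality $\|\tilde z_k - z_k\|_M^2 + \eta_k \le \sigma\|\tilde z_k - z_{k-1}\|_M^2 + \eta_{k-1}$ can be massaged (as in \cite{MJR2,adona2017iteration}) into a bound of the form
\[
\sum_{i=1}^k \|z_{i-1}-z_i\|_M^2 \;\le\; C\,(d_0 + \eta_0)
\]
for an explicit constant $C$ depending on $\sigma$, where $d_0 = \inf\{\|z^*-z_0\|_M^2 : z^*\in T^{-1}(0)\}$; this is precisely the summability estimate underlying Theorem~\ref{th:pointwiseHPE}. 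From here, $\|r^a_k\|_M \le \frac1k\sum_{i=1}^k\|z_{i-1}-z_i\|_M \le \frac1k\sqrt{k\sum_{i=1}^k\|z_{i-1}-z_i\|_M^2}$ by Cauchy–Schwarz, giving the $\mathcal O(1/k)$ rate; one checks the constant works out to $2\sqrt{d_0+\eta_0}$. For the $\varepsilon^a_k$ bound, I would expand $\varepsilon^a_k = \frac1k\sum_i \inner{M(z_{i-1}-z_i)}{\tilde z_i} - \inner{M r^a_k}{\tilde z^a_k}$ and control each inner product using $2\inner{Ma}{b} \le \|a\|_M^2 + \|b\|_M^2$ from \eqref{fact}, again reducing to the telescoped sum $\sum_i \|z_{i-1}-z_i\|_M^2$ plus terms like $\sum_i \|\tilde z_i - z_{i-1}\|_M^2$ which are themselves bounded via \eqref{breg-cond1}; the bookkeeping produces the stated $\frac{3(3-2\sigma)(d_0+\eta_0)}{2(1-\sigma)k}$.

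The main obstacle is the careful constant-tracking in the telescoping argument: one must combine the per-iteration descent inequality \eqref{breg-cond1} with a Pythagoras-type identity for $\|\cdot\|_M$ and then sum, handling the cross terms $\inner{M(\tilde z_k - z_k)}{z_k - z_{k-1}}$ by the $2\inner{Qv}{\tilde v} \le \|v\|_Q^2 + \|\tilde v\|_Q^2$ inequality in \eqref{fact}, all while keeping the coefficients sharp enough to land on the advertised constants. Since the paper explicitly cites \cite[Theorem~2.3]{adona2017iteration} and \cite[Theorem~3.4]{MJR2} for the full argument, I would present the structure above and refer to those sources for the routine (but delicate) inequality chasing rather than reproducing every line.
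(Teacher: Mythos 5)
Your first step is sound and is exactly what the paper's setup supports: taking $v_i=M(z_{i-1}-z_i)\in T(\tilde z_i)$ from \eqref{breg-cond1} and invoking Theorem~\ref{for:trans}(a) yields $\varepsilon_k^a\ge 0$ and $Mr_k^a\in T^{[\varepsilon_k^a]}(\tilde z_k^a)$ (only linearity of $M$ is needed there, not self-adjointness). Note also that the paper itself does not prove this theorem but cites \cite[Theorem~2.3]{adona2017iteration} and \cite[Theorem~3.4]{MJR2}, so the comparison is with the argument in those references.

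The quantitative part of your plan has a genuine gap. For $\|r_k^a\|_M$ you propose $\|r^a_k\|_M\le\frac1k\sum_i\|z_{i-1}-z_i\|_M$ followed by Cauchy--Schwarz against the summability estimate $\sum_i\|z_{i-1}-z_i\|_M^2\le C(d_0+\eta_0)$. That chain yields $\|r_k^a\|_M\le\sqrt{C(d_0+\eta_0)}/\sqrt{k}$, i.e.\ only the $\mathcal{O}(1/\sqrt{k})$ pointwise-type rate; no amount of constant-tracking turns it into $\mathcal{O}(1/k)$, because the triangle inequality discards the cancellation that produces the ergodic rate. The correct route is the exact telescoping $r_k^a=(z_0-z_k)/k$, combined with the quasi-Fej\'er property $\|z_k-z^*\|_M^2+\eta_k\le\|z_0-z^*\|_M^2+\eta_0$ (a consequence of \eqref{breg-cond1} and the monotonicity of $T$), which gives $\|r_k^a\|_M\le(\|z_0-z^*\|_M+\|z_k-z^*\|_M)/k\le 2\sqrt{d_0+\eta_0}/k$. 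A similar issue affects your $\varepsilon_k^a$ plan: bounding each term $\inner{M(z_{i-1}-z_i)}{\tilde z_i-\tilde z_k^a}$ by the Young-type inequality $2\inner{Qv}{\tilde v}\le\|v\|_Q^2+\|\tilde v\|_Q^2$ from \eqref{fact} leaves the contribution $\frac{1}{2k}\sum_i\|\tilde z_i-\tilde z_k^a\|_M^2$, which is only $\mathcal{O}(1)$, not $\mathcal{O}(1/k)$. The reference proof instead uses the exact identity $2\inner{M(z_{i-1}-z_i)}{\tilde z_i-w}=\|w-z_{i-1}\|_M^2-\|w-z_i\|_M^2-\|\tilde z_i-z_{i-1}\|_M^2+\|\tilde z_i-z_i\|_M^2$, sums it so that the first two terms telescope, and then sets $w=\tilde z_k^a$, controlling $\|\tilde z_k^a-z_0\|_M$ and $\|\tilde z_k^a-z_k\|_M$ by the same Fej\'er-type bounds. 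Without these two telescoping identities neither the advertised constants nor even the $1/k$ order is reachable.
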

\begin{remark} 
For a given tolerance $\bar \rho>0$, Theorem~\ref{th:ergHPE} ensures that 
 in at most $\mathcal{O}(1/\bar \rho)$ iterations of the modified HPE framework, the triple $(\tilde z,r,\varepsilon):=(\tilde z^a_{k},r^a_{k},\varepsilon^a_{k})$ satisfies $Mr \in T^\varepsilon(\tilde z)$ and $\max\{\|r\|_M,\varepsilon\}\leq~\bar\rho$. Similarly to  Remark~\ref{Rem:er},  we see that $\tilde z$ can be interpreted as   an approximate solution of \eqref{eq:inc.p}. Note  that, the above ergodic complexity bound  is better than  the pointwise one   by  a factor of $\mathcal{O}(1/\bar \rho)$; however, the  above inclusion  is, in general,  weaker than that of the pointwise case.
\end{remark}
\section{A Partially  Inexact Proximal ADMM and its  Iteration-Complexity Analysis}\label{subsec:Admm1}

Consider the following linearly constrained problem
\begin{equation} \label{optl}
\min \{ f(x) + g(y) : A x + B y =b \},
\end{equation}
where $\X$,  $\Y $ and $\Gamma$ are finite-dimensional real inner product vector spaces, $f:\X\to \bar\R$ and $g:\Y\to \bar\R$ are proper, closed and convex functions, $A:\X\to \Gamma$ and $B:\Y\to \Gamma$ are linear operators, and $b \in \Gamma$.   

In this section, we propose a partially inexact proximal ADMM for computing approximate solutions of~\eqref{optl} and establish  pointwise and ergodic  iteration-complexity bounds  for it.  

We begin by formally stating the method.
\\ [2mm]
{\bf Partially Inexact Proximal ADMM}
 \\[2mm]
Step 0. Let an initial point $(x_0,y_0,\gamma_0) \in \X\times \Y\times \Gamma$, a penalty parameter $\beta>0$,  error tolerance parameters $\tau_1, \tau_2 \in [0,1[$, and a
self-adjoint positive semidefinite linear operator $H: \Y \to \Y$  be given.
Choose a  setpsize parameter
\begin{equation}\label{cond:theta}
\theta\in \left]0,\frac{1-2\tau_1+\sqrt{(1-2\tau_1)^2+4(1-\tau_1)}}{2(1-\tau_1)}\right[,
\end{equation}
 and set $k=1$.
  \\
Step 1. Compute  $(v_k, \, \tilde{x}_k )\in \X\times \X$  such that 
\begin{equation}\label{cond:inex}
v_k \in \partial f(\tilde x_k) - A^*\tilde{\gamma}_{k}, \qquad \|\tilde x_k-x_{k-1}+\beta v_k\|^2\leq\tau_1\| \tilde{\gamma}_{k}-{\gamma}_{k-1}\|^2+\tau_2\|\tilde x_k-x_{k-1}\|^2,
\end{equation}
where
\begin{equation} \label{xtilde1}
 \tilde{\gamma}_{k}={\gamma}_{k-1}-\beta(A\tilde x_k +B y_{k-1} - b),
\end{equation}
and compute an optimal solution $y_k\in \Y$ of the subproblem
\begin{equation} \label{def:tyk-admm1}
\min_{y \in \Y} \left \{ g(y) - \inner{ {\gamma}_{k-1}}{By} +
\frac{\beta}{2} \|   A \tilde{x}_k+B y - b \|^2 +\frac{1}{2}\|y- y_{k-1}\|_{H}^2\right\}.
\end{equation}
Step 2. Set 
\begin{equation}\label{admm:eqxk1}
x_k = x_{k-1}-\beta v_k, \qquad \gamma_k = \gamma_{k-1}-\theta\beta\left(A\tilde{x}_k+By_k-b\right)
\end{equation}
and $k \leftarrow k+1$, and go to step~1.
\\
\begin{remark}\label{remark23}
(i) If $\tau_1=\tau_2=0$, then $\tilde x_k=x_k$ due to the inequality in \eqref{cond:inex} and the first relation in \eqref{admm:eqxk1}. Hence,  since  $v_k=(x_{k-1}-x_k)/\beta$,  the first subproblem of Step~1 is equivalent to compute an exact solution $x_k\in \X$ of the following subproblem
\begin{equation} \label{eq:probx_exact}
\min_{x \in \X} \left \{ f(x) - \inner{ {\gamma}_{k-1}}{Ax} +
\frac{\beta}{2} \|   A x+B y_{k-1} - b \|^2 +\frac{1}{2\beta}\|x-x_{k-1}\|^2\right\},
\end{equation}
and then  the partially inexact proximal ADMM  becomes the  proximal ADMM with stepsize $\theta \in \, ]0,(1+\sqrt{5})/2[$ and proximal terms given by $(1/\beta)I$ and $H$. Therefore, the proposed method can be seen as an extension of the proximal ADMM, which  subproblem \eqref{eq:probx_exact} is   solved inexactly  using  a relative approximate criterion. (ii) Subproblem \eqref{def:tyk-admm1} contains a proximal term defined by a self-adjoint positive semidefinite linear operator
 $H$ which, appropriately chosen, makes the subproblem easier to solve or even to have closed-form solution.
For instance, if  $H=sI -\beta B^* B$ with $s>\beta\|B\|^2$,   subproblem  \eqref{def:tyk-admm1} is equivalent to
\[
\min_{y \in \Y} \left \{ g(y) +
\frac{s}{2} \| y-\bar y   \|^2 \right\},
\]
for some $\bar y \in \Y$, which has a closed-form solution when $g(\cdot)=\|\cdot\|_1$.   (iii) The use of a relative approximate criterion in  \eqref{def:tyk-admm1}
 requires, as far as we know,  the  stepsize parameter $\theta\in ]0,1]$. However, since,  in many applications, the second subproblem \eqref{def:tyk-admm1}  is   solved  exactly  
 and a stepsize parameter $\theta>1$ accelerates the method, here  only the first subproblem is assumed to be solved inexactly.
 (iv) The  partially inexact proximal ADMM is close related to  \cite[Algorithm~2]{Eckstein2017App}. Indeed, the latter method corresponds to  the former one with $H=0$,  $\theta=1$ and the following condition 
\begin{equation}\label{er:EC}
2\beta |\inner{\tilde x_k-x_{k-1}}{v_k}|+\beta^2\|v_k\|^2\leq \tau_1\| \tilde{\gamma}_{k}-{\gamma}_{k-1}\|^2
\end{equation}
instead of the inequality  in \eqref{cond:inex}. Numerical comparisons between the  partially inexact proximal ADMM and Algorithm~2 in \cite{Eckstein2017App} will be provided   in  Section~ \ref{sec:exp}.

\end{remark}

In the following, we proceed to provide iteration-complexity bounds for  the partially inexact proximal ADMM. Our analysis is done by  showing that  it is an instance of the modified HPE framework for computing approximate solutions of the monotone inclusion problem
\begin{equation} \label{FAB}
0\in T(x,y,\gamma) = \left[ \begin{array}{c}  \partial f(x)- A^{*}\gamma \\ \partial g(y)- B^{*}\gamma \\ Ax+By-b
\end{array} \right].
\end{equation}
We  assume that  the solution set  of~\eqref{FAB}, denoted by $\Omega^*$, is nonempty.
The iteration-complexity results will follow immediately  from Theorems \ref{th:pointwiseHPE} and \ref{th:ergHPE}. Let us now introduce the elements required by the setting of Section \ref{sec:smhpe}. Namely,
consider  the vector space $\Z=\X\times\Y\times \Gamma$ and the  self-adjoint positive semidefinite linear operator   
\begin{equation}\label{def:matrixM1}
M=\left[ 
\begin{array}{ccc} 
I/\beta &0&0\\
0&(H+{\beta} B^*B)&0\\[2mm]
0&0&{I}/(\theta\beta)
\end{array} \right].
\end{equation}
In this setting,  the quantity $d_0$ defined in Theorem~\ref{th:pointwiseHPE}  becomes  
\begin{equation}\label{def:d0admm1}
d_0=\inf\left\{\|(x-x_0,y-y_0,\gamma-\gamma_0)\|^{2}_{M}: {(x,y,\gamma) \in T^{-1}(0)}\right\}.
\end{equation}

We start by presenting a preliminary technical result, which basically shows that a certain sequence generated by  the partially inexact  proximal ADMM satisfies the inclusion in \eqref{breg-cond1} with $T$ and $M$ as above.

\begin{lemma} \label{lem:inclusion}
Consider $(x_k,y_k,\gamma_k)$  and $(\tilde x_k,\tilde{\gamma}_k)$ generated at  the k-iteration of the partially inexact  proximal ADMM.
Then,
\begin{eqnarray}
\ \frac{1}{\beta}(x_{k-1}-x_k)&\in& \partial f(\tilde{x}_k)-A^*\tilde{\gamma}_k,  \label{aux.01}\\
(H+\beta B^*B)(y_{k-1}-y_k)&\in& \partial g(y_k)-B^*\tilde\gamma_k,\label{aux.21}\\
 \frac{1}{\theta\beta}(\gamma_{k-1}-\gamma_k)&=& A\tilde{x}_k+By_k-b.\label{aux.11}
\end{eqnarray}
As a consequence,  $z_k=(x_k,y_k, \gamma_k)$ and $\tilde z_k=(\tilde{x}_k,y_k,\tilde \gamma_k)$  
satisfy  inclusion~\eqref{breg-cond1} with $T$ and $M$ as in \eqref{FAB} and \eqref{def:matrixM1}, respectively.
\end{lemma}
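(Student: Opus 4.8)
The plan is to prove the three displayed inclusions/identities first, and then assemble them into the single inclusion and norm inequality required by \eqref{breg-cond1}. For \eqref{aux.01}, I would simply combine the inclusion in \eqref{cond:inex} (namely $v_k \in \partial f(\tilde x_k) - A^*\tilde\gamma_k$) with the first update rule in \eqref{admm:eqxk1} (namely $x_k = x_{k-1}-\beta v_k$, so that $v_k = (x_{k-1}-x_k)/\beta$); substituting gives \eqref{aux.01} immediately. For \eqref{aux.11}, I would start from the second update rule in \eqref{admm:eqxk1}, $\gamma_k = \gamma_{k-1}-\theta\beta(A\tilde x_k+By_k-b)$, and solve for $A\tilde x_k+By_k-b$, which is exactly \eqref{aux.11}.

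The only nontrivial step is \eqref{aux.21}. Here I would write the optimality condition for the $y$-subproblem \eqref{def:tyk-admm1}: since $y_k$ minimizes a sum of the closed convex function $g$ and a smooth convex quadratic, $0 \in \partial g(y_k) - B^*\gamma_{k-1} + \beta B^*(A\tilde x_k + By_k - b) + H(y_k - y_{k-1})$. The plan is then to rewrite the term $-B^*\gamma_{k-1} + \beta B^*(A\tilde x_k+By_k-b)$ by introducing $\tilde\gamma_k$. From \eqref{xtilde1} we have $\tilde\gamma_k = \gamma_{k-1} - \beta(A\tilde x_k + By_{k-1}-b)$, hence $-B^*\gamma_{k-1} + \beta B^*(A\tilde x_k+By_{k-1}-b) = -B^*\tilde\gamma_k$. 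The leftover discrepancy, coming from using $y_k$ instead of $y_{k-1}$ inside the quadratic penalty, is $\beta B^*B(y_k - y_{k-1})$. Collecting everything, the optimality condition becomes $0 \in \partial g(y_k) - B^*\tilde\gamma_k + \beta B^*B(y_k-y_{k-1}) + H(y_k-y_{k-1})$, i.e. $(H+\beta B^*B)(y_{k-1}-y_k) \in \partial g(y_k) - B^*\tilde\gamma_k$, which is \eqref{aux.21}. The main obstacle, though entirely routine, is keeping the bookkeeping of which $\gamma$ and which $y$ appear where so that the $\tilde\gamma_k$ substitution is done correctly; the key algebraic identity is $\tilde\gamma_k = \gamma_{k-1}-\beta(A\tilde x_k+By_{k-1}-b)$ from \eqref{xtilde1}.

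For the final claim, I would set $z_k=(x_k,y_k,\gamma_k)$ and $\tilde z_k=(\tilde x_k,y_k,\tilde\gamma_k)$ and observe that, by the block-diagonal form of $M$ in \eqref{def:matrixM1}, the three components of $M(z_{k-1}-z_k)$ are exactly $(1/\beta)(x_{k-1}-x_k)$, $(H+\beta B^*B)(y_{k-1}-y_k)$, and $(1/(\theta\beta))(\gamma_{k-1}-\gamma_k)$. By \eqref{aux.01}, \eqref{aux.21}, and \eqref{aux.11} these lie respectively in $\partial f(\tilde x_k)-A^*\tilde\gamma_k$, $\partial g(y_k)-B^*\tilde\gamma_k$, and equal $A\tilde x_k + By_k - b$; reading off the definition of $T$ in \eqref{FAB} evaluated at $\tilde z_k=(\tilde x_k, y_k, \tilde\gamma_k)$, this says precisely $M(z_{k-1}-z_k)\in T(\tilde z_k)$, which is the inclusion in \eqref{breg-cond1}. (The accompanying seminorm inequality in \eqref{breg-cond1}, with the appropriate choice of $\eta_k$, is presumably established in a subsequent lemma using the error criterion \eqref{cond:inex} and the restriction \eqref{cond:theta} on $\theta$; it is not asserted here.)
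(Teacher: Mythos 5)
Your proposal is correct and follows essentially the same route as the paper: \eqref{aux.01} and \eqref{aux.11} read off directly from \eqref{cond:inex} and \eqref{admm:eqxk1}, and \eqref{aux.21} is obtained from the optimality condition of \eqref{def:tyk-admm1} by the same substitution $-B^*\gamma_{k-1}+\beta B^*(A\tilde x_k+By_k-b)=-B^*\tilde\gamma_k+\beta B^*B(y_k-y_{k-1})$ using \eqref{xtilde1}. Your closing remark is also accurate: the lemma (and the paper's proof) only asserts the inclusion in \eqref{breg-cond1}, the accompanying inequality being handled later in Proposition~\ref{maincorADMM1}.
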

\begin{proof} Inclusion \eqref{aux.01} follows trivially from the inclusion in \eqref{cond:inex} and the first relation  in \eqref{admm:eqxk1}.
Now, from the optimality condition of \eqref{def:tyk-admm1} and  the definition of $\tilde\gamma_k$ in \eqref{xtilde1}, we obtain
\begin{align*}
0 &\in \partial g(y_k)-B^*\gamma_{k-1}+\beta B^*(A\tilde{x}_k+By_k-b)+H(y_k- y_{k-1})\\[2mm]
   &= \partial g(y_k)-B^*[\gamma_{k-1}-\beta (A\tilde{x}_k+B{y}_{k-1}-b)]+\beta B^*B(y_{k}-{y}_{k-1})+H(y_k- y_{k-1}) \\[2mm]
   & = \partial g(y_k)-B^*\tilde{\gamma}_k+\beta B^*B(y_{k}-{y}_{k-1})+H(y_k-y_{k-1}).
\end{align*}
which proves to  \eqref{aux.21}.
The relation  \eqref{aux.11} follows immediately from the second relation in \eqref{admm:eqxk1}.
To end the proof, note that the last statement  of the lemma follows directly by \eqref{aux.01}--\eqref{aux.11} and definitions of $T$ and $M$  in \eqref{FAB} and \eqref{def:matrixM1}, respectively.
\qed
\end{proof}

The following  result presents some relations satisfied by the sequences generated by the partially inexact proximal ADMM. These relations are essential  to show that the latter method is   an instance of the modified HPE framework.

\begin{lemma}\label{lem:deltak1}
Let $\{(x_k,y_k,\gamma_k)\}$  and $\{(\tilde x_k,\tilde{\gamma}_k)\}$ be generated by the partially inexact  proximal ADMM.
Then, the following hold:
 \begin{itemize}
\item[(a)]  for any $k\geq 1$, we have
\[
 \tilde{\gamma}_k-\gamma_{k-1}=\frac{1}{\theta}(\gamma_k-\gamma_{k-1})+\beta B(y_{k}-y_{k-1}), \quad  \tilde{\gamma}_k-\gamma_k=\frac{1-\theta}{\theta}(\gamma_k-\gamma_{k-1})+\beta B(y_{k}-y_{k-1});
\]
\item[(b)] we have
\[
 \frac{1}{2}\|y_1-y_0\|_{H}^2-\frac{1}{\sqrt{\theta}}\langle B(y_{1}-y_{0}),\gamma_1-\gamma_{0} \rangle   \leq  2\max\left\{{1},\frac{\theta}{2-\theta}\right\} d_0,
\] 
where  $d_0$ is as
in \eqref{def:d0admm1};
\item[(c)] for every $k\geq 2$, we have
 {
\[ \frac{1}{\theta} \left\langle{\gamma_k-\gamma_{k-1},}{B(y_k-y_{k-1})}\right\rangle
 \geq \frac{1-\theta}{\theta} \left\langle{\gamma_{k-1}-\gamma_{k-2},}{B(y_k-y_{k-1})}\right\rangle+ \frac{1}{2}\|y_k-y_{k-1}\|_{H}^2-\frac12\|y_{k-1}-y_{k-2}\|_{H}^2.
\]}
 \end{itemize} 
\end{lemma}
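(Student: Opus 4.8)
The plan is to prove the three items in order, each relying on the update rules \eqref{xtilde1}, \eqref{admm:eqxk1} together with the inclusions/relations from Lemma~\ref{lem:inclusion}.

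For part~(a), I would start from the definition of $\tilde\gamma_k$ in \eqref{xtilde1}, namely $\tilde\gamma_k = \gamma_{k-1}-\beta(A\tilde x_k + By_{k-1}-b)$, and from the multiplier update $\gamma_k = \gamma_{k-1}-\theta\beta(A\tilde x_k + By_k - b)$ in \eqref{admm:eqxk1}. Solving the latter for $A\tilde x_k - b$ gives $A\tilde x_k - b = (\gamma_{k-1}-\gamma_k)/(\theta\beta) - By_k$; substituting this into the expression for $\tilde\gamma_k - \gamma_{k-1}$ yields $\tilde\gamma_k - \gamma_{k-1} = -\beta\big[(\gamma_{k-1}-\gamma_k)/(\theta\beta) - By_k + By_{k-1}\big] = (\gamma_k-\gamma_{k-1})/\theta + \beta B(y_k - y_{k-1})$, which is the first identity. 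The second identity follows by subtracting $\gamma_k - \gamma_{k-1}$ from both sides of the first one, since $(1/\theta) - 1 = (1-\theta)/\theta$. This part is purely algebraic and poses no difficulty.

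For part~(c), the idea is to take the first identity of part~(a) written at iteration $k$ and at iteration $k-1$ (both valid since $k\ge 2$), and to compare them. Writing $a_k := \gamma_k - \gamma_{k-1}$ and $w_k := B(y_k - y_{k-1})$, part~(a) gives $\tilde\gamma_k - \gamma_{k-1} = a_k/\theta + \beta w_k$. I would pair the inclusion \eqref{aux.21}, i.e. $(H+\beta B^*B)(y_{k-1}-y_k) \in \partial g(y_k) - B^*\tilde\gamma_k$, at iterations $k$ and $k-1$ with the monotonicity of $\partial g$: this produces $\langle \tilde\gamma_k - \tilde\gamma_{k-1}, y_k - y_{k-1}\rangle \ge \langle (H+\beta B^*B)(y_k - y_{k-1}) - (H+\beta B^*B)(y_{k-1}-y_{k-2})\,,\,-(y_k-y_{k-1})\rangle$ — or more cleanly, $\langle B^*\tilde\gamma_k - B^*\tilde\gamma_{k-1} - (H+\beta B^*B)(y_{k-1}-y_k)+(H+\beta B^*B)(y_{k-2}-y_{k-1}), y_k - y_{k-1}\rangle \ge 0$. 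Now I substitute $\tilde\gamma_k - \gamma_{k-1} = a_k/\theta + \beta w_k$ and $\tilde\gamma_{k-1}-\gamma_{k-2} = a_{k-1}/\theta + \beta w_{k-1}$, note $\langle B^*u, y_k-y_{k-1}\rangle = \langle u, w_k\rangle$, and that $\beta\langle w_k, w_k\rangle = \beta\|w_k\|^2 = \|y_k-y_{k-1}\|_{\beta B^*B}^2$; collecting the $\beta B^*B$ terms against the corresponding norm terms and the $H$ terms against $\|y_k-y_{k-1}\|_H^2$, the $\beta B^*B$ contributions cancel in a way that leaves exactly $(1/\theta)\langle a_k, w_k\rangle \ge ((1-\theta)/\theta)\langle a_{k-1},w_k\rangle + \tfrac12\|y_k-y_{k-1}\|_H^2 - \tfrac12\|y_{k-1}-y_{k-2}\|_H^2$, possibly after using $\langle H(y_k-y_{k-1}),y_k-y_{k-1}\rangle - \langle H(y_{k-1}-y_{k-2}),y_k-y_{k-1}\rangle \ge \tfrac12\|y_k-y_{k-1}\|_H^2 - \tfrac12\|y_{k-1}-y_{k-2}\|_H^2$, which itself comes from the first inequality in \eqref{fact} applied to $Q=H$.

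For part~(b), I would use the pointwise bound already embedded in the HPE machinery: by Lemma~\ref{lem:inclusion}, $z_k = (x_k,y_k,\gamma_k)$ and $\tilde z_k = (\tilde x_k, y_k, \tilde\gamma_k)$ satisfy \eqref{breg-cond1}, and in particular the descent-type estimate underlying Theorem~\ref{th:pointwiseHPE} controls $\|z_0 - z^*\|_M^2$-type quantities; more concretely, I expect $\|\tilde z_1 - z_0\|_M^2 \le$ (a multiple of) $d_0$ after one step, for an appropriate $\sigma$ coming from \eqref{cond:theta}. Then I would expand $\|\tilde z_1 - z_0\|_M^2$ using the block structure of $M$ in \eqref{def:matrixM1}: the middle block gives $\|y_1 - y_0\|_{H+\beta B^*B}^2 = \|y_1-y_0\|_H^2 + \beta\|B(y_1-y_0)\|^2$, the last block gives $\|\tilde\gamma_1 - \gamma_0\|^2/(\theta\beta)$, and part~(a) at $k=1$ gives $\tilde\gamma_1 - \gamma_0 = (\gamma_1-\gamma_0)/\theta + \beta B(y_1-y_0)$ (here $\tilde\gamma_1 - \gamma_0 = (1/\theta)(\gamma_1-\gamma_0) + \beta B(y_1-y_0)$ since $y_0$ is the given initial point so the ``$y_{k-1}$'' term is $y_0$). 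Expanding $\|\tilde\gamma_1-\gamma_0\|^2$ produces the cross term $(2/\theta)\langle \gamma_1-\gamma_0, \beta B(y_1-y_0)\rangle$, and dividing by $\theta\beta$ and combining with the $\beta\|B(y_1-y_0)\|^2$ from the middle block, after discarding nonnegative square terms, leaves $\tfrac12\|y_1-y_0\|_H^2 + \text{(const)}\langle B(y_1-y_0),\gamma_1-\gamma_0\rangle$ bounded by a multiple of $d_0$; tracking the constant $2\max\{1,\theta/(2-\theta)\}$ requires care with the factor $1/\sqrt{\theta}$ appearing in the stated inequality, which suggests the cross term should be grouped as a perfect-square completion rather than estimated crudely.

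\textbf{Main obstacle.} The routine part is (a); the real bookkeeping is in (c), where one must verify that the $\beta B^*B$ pieces cancel exactly so that only the $H$-seminorm telescoping term survives — any sign or factor-of-$\theta$ slip there breaks the estimate. For (b), the subtlety is to produce the precise constant $2\max\{1,\theta/(2-\theta)\}$ and the exact coefficient $1/\sqrt\theta$ on the cross term, which forces the $\sigma$ from \eqref{cond:theta} and the completion-of-squares to be handled simultaneously rather than separately.
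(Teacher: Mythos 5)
Your treatments of parts (a) and (c) are correct and essentially identical to the paper's: (a) is the same algebraic substitution of \eqref{xtilde1} into \eqref{admm:eqxk1}, and for (c) your pairing of the inclusion \eqref{aux.21} at iterations $k$ and $k-1$ with the monotonicity of $\partial g$, followed by the first inequality in \eqref{fact} for the $H$-terms, is exactly the paper's argument (the $\beta B^*B$ contributions do cancel exactly, as you anticipated, because $B^*\tilde\gamma_k-\beta B^*B(y_k-y_{k-1})=\tfrac{1}{\theta}B^*(\gamma_k-(1-\theta)\gamma_{k-1})$ by part (a)).

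Part (b), however, has a genuine gap: your plan is circular. You propose to invoke ``the descent-type estimate underlying Theorem~\ref{th:pointwiseHPE}'' to get $\|\tilde z_1-z_0\|_M^2\le C\,d_0$, but Lemma~\ref{lem:inclusion} only establishes the \emph{inclusion} in \eqref{breg-cond1}; the error \emph{inequality} in \eqref{breg-cond1} (with the $\sigma$ and $\eta_k$ that the HPE machinery needs) is proved only later, in Proposition~\ref{maincorADMM1}, and its proof for $k=1$ uses precisely Lemma~\ref{lem:deltak1}(b). So you cannot appeal to the HPE framework here. The paper's route uses only what is available at this stage: (i) the elementary bound $\tfrac12\|y_1-y_0\|_{H}^2-\tfrac{1}{\sqrt\theta}\langle B(y_1-y_0),\gamma_1-\gamma_0\rangle\le\|z_1-z^*\|_M^2+\|z_0-z^*\|_M^2$, obtained from the nonnegativity of $\tfrac{1}{2\beta}\|\tfrac{1}{\sqrt\theta}(\gamma_1-\gamma_0)+\beta B(y_1-y_0)\|^2$ and the second inequality in \eqref{fact}; and (ii) a one-step Fej\'er-type estimate $\|z^*-z_0\|_M^2-\|z^*-z_1\|_M^2\ge\|\tilde z_1-z_0\|_M^2-\|\tilde z_1-z_1\|_M^2$, which follows from the inclusion $M(z_0-z_1)\in T(\tilde z_1)$ and the monotonicity of $T$ alone. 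The real work, which your plan does not identify, is then to lower-bound $\|\tilde z_1-z_0\|_M^2-\|\tilde z_1-z_1\|_M^2$ by $\tfrac{1-\theta}{\beta\theta^2}\|\gamma_1-\gamma_0\|^2$ using part (a) and the inexactness test in \eqref{cond:inex} at $k=1$ (to absorb $\|\tilde x_1-x_0\|^2-\|\tilde x_1-x_1\|^2+\tau_1\|\tilde\gamma_1-\gamma_0\|^2\ge0$), and to treat the cases $\theta\le1$ and $\theta>1$ separately; the latter case yields $\|z_1-z^*\|_M^2\le\tfrac{3\theta-2}{2-\theta}\|z_0-z^*\|_M^2$, which is where the constant $2\max\{1,\theta/(2-\theta)\}$ comes from. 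Without this step the factor multiplying $d_0$ cannot be produced, and no $\sigma$ from \eqref{cond:theta} or Lemma~\ref{pro:sigma} enters part (b) at all.
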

\begin{proof}
(a) The first relation follows by noting that the   definitions of $\tilde\gamma_k$ and   ${\gamma}_k$ in \eqref{xtilde1} and \eqref{admm:eqxk1}, respectively, yield
\begin{align*}
 \tilde{\gamma}_{k}-{\gamma}_{k-1}= -\beta(A\tilde x_k+By_{k-1}-b)=\frac{1}{\theta}(\gamma_k-\gamma_{k-1})+\beta B(y_k-y_{k-1}).
\end{align*}
The second relation in (a) follows trivially from the first one.


%
(b) First, note that 
\begin{align*}
 \nonumber
 0 \leq \dfrac{1}{2\beta}\left\|\frac{1}{\sqrt{\theta}}(\gamma_1-\gamma_{0})+ \beta B(y_{1}-y_{0})\right\|^2
 = \dfrac{1}{2\theta\beta}\| \gamma_1-\gamma_{0}\|^2+\frac{1}{\sqrt{\theta}}\langle B(y_{1}-y_{0}),\gamma_1-\gamma_{0} \rangle
 +\dfrac{\beta}{2}\| B(y_{1}-y_{0})\|^2,
\end{align*}
which, for every $z^*=(x^*,y^*,\gamma^*)\in \Omega^*$,  yields
\begin{align*}
\frac{1}{2}\|y_1-y_0\|_{H_{2}}^2-\frac{1}{\sqrt{\theta}}\langle B(y_{1}-y_{0}),\gamma_1-\gamma_{0} \rangle &\leq 
\frac{1}{2}\left(\|y_1-y_0\|_{H_{2}}^2+ \frac{1}{\theta\beta}\|\gamma_1-\gamma_{0}\|^2
 +\beta\| B(y_{1}-y_{0})\|^2\right)\\
 &\leq
\|y_1-y^*\|_{H_{2}}^2+
\|y_0-y^*\|_{H_{2}}^2+
 \frac{1}{\theta\beta}\|\gamma_1-\gamma^*\|^2\\
&+ \frac{1}{\theta\beta}\| \gamma_0-\gamma^*\|^2+
\beta\| B(y_{1}-y^*)\|^2 +
\beta\| B(y_{0}-y^*)\|^2,
\end{align*}
where the last inequality is due to the second property  in  \eqref{fact}.
Hence,  using \eqref{def:matrixM1}, we obtain 
\begin{equation}
 \label{eq_0000000121}
 \frac{1}{2}\|y_1-y_0\|_{H_{2}}^2-\frac{1}{\sqrt{\theta}}\langle B(y_{1}-y_{0}),\gamma_1-\gamma_{0} \rangle\leq  
 \|z_1-z^*\|^2_{M} +\|z_0-z^*\|^2_{M},
\end{equation}
where $z_0=(x_0,y_0,\gamma_0)$ and $z_1=(x_1,y_1,\gamma_1)$. On the other hand, from Lemma~\ref{lem:inclusion}  with $k=1$, we have $M(z_0-z_1)\in T(\tilde z_1)$, where $\tilde z_1=(\tilde{x}_1,y_1,\tilde \gamma_1)$ and $T$
is as in \eqref{FAB}. Using this fact
and the monotonicity of $T$, we obtain $\langle \tilde z_1-z^*, M(z_0-z_1)\rangle\geq 0$ for all $z^*=(x^*,y^*,z^*)\in \Omega^*$. Hence,
\begin{align}
 \nonumber
 \|z^*-z_0\|_{M}^2 - \|z^*-z_1\|_{M}^2 &= \|\tilde{z}_1-z_0\|_{M}^2-\|\tilde{z}_1-z_1\|_{M}^2
   +2\langle \tilde{z}_1-z^*, M(z_0-z_1) \rangle \label{eq_0000000123}\\
   &\geq  \|\tilde{z}_1-z_0\|_{M}^2-\|\tilde{z}_1-z_1\|_{M}^2.
\end{align}
It follows from 
\eqref{def:matrixM1}, item  (a), and some direct calculations that
\begin{align} \label{eq:deltakx}
\|\tilde z_1&-z_1\|^2_{M}=\frac{1}{\beta}\|\tilde x_1-x_1\|^2+\frac{1}{\theta\beta}\|\tilde \gamma_1-\gamma_1\|^2=\frac{1}{\beta}\|\tilde x_1-x_1\|^2+\frac{1}{\theta\beta}\left\|\frac{1-\theta}{\theta}(\gamma_1-\gamma_{0})+ \beta B(y_{1}-y_{0})\right\|^2\nonumber\\
 &= \frac{1}{\beta}\|\tilde x_1-x_1\|^2+\frac{(1-\theta)^2}{\beta\theta^3}\| \gamma_1-\gamma_{0}\|^2+\frac{2(1-\theta)}{\theta^2}\langle B(y_{1}-y_{0}),\gamma_1-\gamma_{0} \rangle
 +\frac{\beta}{\theta}\| B(y_{1}-y_{0})\|^2.
\end{align}
Moreover,  \eqref{def:matrixM1} and item  (a) also yield
\begin{align} 
 \nonumber
& \|\tilde z_1-z_0\|^2_{M}=\frac{1}{\beta}\|\tilde x_1-x_0\|^2+\|y_1-y_0\|^2_{(\beta B^* B+H)}+\frac{1}{\theta\beta}\|\tilde \gamma_1-\gamma_0\|^2\\
  &\geq \frac{1}{\beta}\|\tilde x_1-x_0\|^2+\beta\|B(y_1-y_0)\|^2+\frac{\tau_1}{\beta}\|\tilde \gamma_1-\gamma_0\|^2+\frac{1-\tau_1\theta}{\theta\beta}\left\|\frac{1}{\theta}(\gamma_1-\gamma_{0})+\beta B(y_{1}-y_{0})\right\|^2\nonumber\\
  &= \frac{1}{\beta}\|\tilde x_1-x_0\|^2+\frac{\tau_1}{\beta}\|\tilde \gamma_1-\gamma_0\|^2+ \frac{[1+(1-\tau_1)\theta]\beta}{\theta}\|B(y_1-y_0)\|^2+\frac{1-\tau_1\theta}{\beta\theta^3}\left\|\gamma_1-\gamma_{0}\right\|^2\nonumber\\
  &+\frac{2(1-\tau_1\theta)}{\theta^2}\langle B(y_{1}-y_{0}),\gamma_1-\gamma_{0} \rangle.\label{eq:deltaky}
\end{align}
Combining the above two conclusions, we obtain
\begin{align} \label{eeq:a45}
 \|\tilde z_1-z_0\|^2_{M}-\|\tilde z_1-z_1\|^2_{M}
 &\geq \frac{1}{\beta}\left(\|\tilde x_1-x_0\|^2-\|\tilde x_1-x_1\|^2+{\tau_1}\|\tilde \gamma_1-\gamma_0\|^2\right)+(1-\tau_1)\beta\| B(y_{1}-y_{0})\|^2  \nonumber\\ 
 &+ \frac{2-\theta-\tau_1}{\beta\theta^2}\|\gamma_1- \gamma_0\|^2+
 \frac{2(1-\tau_1)}{\theta}\langle B(y_{1}-y_{0}),\gamma_1-\gamma_{0} \rangle.
  \end{align}
  Now, note that the inequality in \eqref{cond:inex} with $k=1$ and the definition of $x_1$ in  \eqref{def:tyk-admm1} imply that 
\[
0\leq\tau_2\|\tilde x_1-x_{0}\|^2 - \|\tilde x_1-x_1\|^2+ \tau_1\| \tilde{\gamma}_{1}-{\gamma}_{{0}} \|^2
\]
 which, combined with \eqref{eeq:a45} and $\tau_2 \in [0,1[$, yields  
 \begin{align} \nonumber
  \|\tilde z_1-z_0\|^2_{M}-\|\tilde z_1-z_1\|^2_{M}
 &\geq (1-\tau_1)\beta\| B(y_{1}-y_{0})\|^2 + \frac{2-\theta-\tau_1}{\beta\theta^2}\|\gamma_1- \gamma_0\|^2+
 \frac{2(1-\tau_1)}{\theta}\langle B(y_{1}-y_{0}),\gamma_1-\gamma_{0} \rangle\\
 &=\frac{1-\theta}{\beta\theta^2}\|\gamma_1- \gamma_0\|^2+(1-\tau_1)\left\|\sqrt{\beta}B(y_{1}-y_{0})+\frac{1}{\theta\sqrt{\beta}}(\gamma_1- \gamma_0)\right\|^2\nonumber\\
 &\geq\frac{1-\theta}{\beta\theta^2}\|\gamma_1- \gamma_0\|^2\nonumber.
 \end{align}
Hence,  if $\theta\in ]0,1]$, then we have
\begin{equation}\label{ineq_s23}
 \|\tilde z_1-z_1\|^2_{M}\leq \|\tilde z_1-z_0\|^2_{M}.
\end{equation}
Now, if $\theta> 1$, then  we have
\begin{align*}
 \|\tilde z_1-z_1\|^2_{M}- \|\tilde z_1-z_0\|^2_{M}&\leq\frac{\theta-1}{\beta\theta^2}\|\gamma_1- \gamma_0\|^2\\
 &\leq \frac{2(\theta-1)}{\theta}\left(\frac1{\beta\theta}\|\gamma_1- \gamma^*\|^2+\frac1{\beta\theta}\|\gamma_0-\gamma^*\|^2\right)\\
&\leq \frac{2(\theta-1)}{\theta} \left[\|z_0-{z}^*\|_{M}^2+\|z_1-{z}^*\|_{M}^2\right]
\end{align*}
where   the second inequality is due to the second property  in  \eqref{fact}, and the last inequality is due to \eqref{def:matrixM1} and definitions of $z_0,z_1$ and ${z}^*$.
Hence, combining the last estimative with \eqref{eq_0000000123},   we obtain 
$$
\|z_1-{z}^*\|_{M}^2\leq\frac{3\theta-2}{2-\theta}\|z_0-{z}^* \|_{M}^2.
$$ 
Thus, it follows from   \eqref{eq_0000000123}, \eqref{ineq_s23} and the last inequality that 
\begin{equation}\label{eq:457}
\|z_1-{z}^*\|_{M}^2\leq \max\left\{1,\frac{3\theta-2}{2-\theta}\right\}\|z_0-{z}^*\|_{M}^2.
\end{equation}
Therefore, the desired inequality follows  from  \eqref{eq_0000000121},  \eqref{eq:457}
and the definition of $d_0$ in \eqref{def:d0admm1}.

(c) From the optimality condition for \eqref{def:tyk-admm1}, the definition of $ \tilde\gamma_k$ in \eqref{xtilde1}
and item (a), we have, for every $k\geq 1$, 
\begin{align*}
 \partial g(y_k)\ni B^*( \tilde\gamma_k-\beta B(y_k-y_{k-1}))-H(y_k-y_{k-1})=\frac{1}{\theta}B^*( \gamma_k-(1-\theta)\gamma_{k-1})-H(y_k-y_{k-1}). 
\end{align*}
For any $k\geq 2$, using the above inclusion with $k \leftarrow k$ and $k \leftarrow k-1$ and the monotonicity of $\partial g$ , we obtain
%
\begin{align*}
&\frac{1}{\theta} \left\langle{B^*(\gamma_k-\gamma_{k-1})-(1-\theta)B^*(\gamma_{k-1}-\gamma_{k-2})},{y_k-y_{k-1}}\right\rangle\\ 
&\geq \inner{H(y_k-y_{k-1})}{y_k-y_{k-1}}-\inner{H(y_{k-1}-y_{k-2})}{y_k-y_{k-1}}\\
 &\geq \frac{1}{2}\|y_k-y_{k-1}\|_{H}^2-\dfrac{1}{2}\|y_{k-1}-y_{k-2}\|_{H}^2,
\end{align*}
where the last inequality is due to  the first property in \eqref{fact}, and so the proof of the lemma follows.
\qed
\end{proof}

We next consider a technical result.

\begin{lemma}\label{pro:sigma}
Let scalars $\tau_1, \tau_2$ and $\theta$ be as in step~0 of the partially inexact proximal ADMM. Then,  there exists a scalar $\sigma\in [\tau_2,1[$ such that the  matrix  
 \begin{equation} \label{matrixtheta>1}
G=\left[
\begin{array}{cc} 
 \sigma-1+(\sigma-\tau_1)\theta& (1-\theta)[\sigma-1+(1-\tau_1)\theta]\\[2mm]
(1-\theta)[\sigma-1+(1-\tau_1)\theta]&  \sigma-1+(2-\theta-\tau_1)\theta\\
\end{array} \right]
\end{equation}
 is positive semidefinite.  
 \end{lemma}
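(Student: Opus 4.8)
The plan is to exhibit an explicit value of $\sigma$ for which $G$ is positive semidefinite, then verify the two defining conditions: $\sigma-1+(\sigma-\tau_1)\theta\ge 0$ (nonnegativity of the $(1,1)$-entry) and $\det G\ge 0$. A natural candidate is to take $\sigma$ as small as possible subject to the $(1,1)$-entry being nonnegative, i.e. to look at $\sigma$ near $\sigma_0:=(1+\tau_1\theta)/(1+\theta)$, which makes the $(1,1)$-entry equal to zero; more precisely I would first check whether $\sigma_0<1$ (it is, since $\tau_1<1$) and whether $\sigma_0\ge\tau_2$, and if not, replace $\sigma_0$ by $\max\{\sigma_0,\tau_2\}$ and re-examine. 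The key point driving the whole argument is the upper bound on $\theta$ in \eqref{cond:theta}: $\theta$ lies below the positive root of $(1-\tau_1)\theta^2-(1-2\tau_1)\theta-1=0$, equivalently $(1-\tau_1)\theta^2-\theta<\tau_1\theta-1+\theta=\ldots$; I expect this is precisely the inequality that forces $\det G\ge 0$ at the critical choice of $\sigma$.

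Concretely, I would proceed as follows. First, when the $(1,1)$-entry vanishes (i.e.\ $\sigma=\sigma_0$), a positive semidefinite $2\times2$ matrix must have vanishing off-diagonal entries as well, so I would instead pick $\sigma$ slightly larger than $\sigma_0$ and compute $\det G$ as a function of $\sigma$. Writing $a(\sigma)=\sigma-1+(\sigma-\tau_1)\theta$, $c(\sigma)=\sigma-1+(2-\theta-\tau_1)\theta$, and $b(\sigma)=(1-\theta)[\sigma-1+(1-\tau_1)\theta]$, one has $\det G=a(\sigma)c(\sigma)-b(\sigma)^2$. I would expand this as a quadratic in $\sigma$ and observe that as $\sigma\uparrow 1$ the matrix $G$ tends to $\mathrm{diag}$-like positive form (the limit entries become $(1-\tau_1)\theta$, $(1-\theta)(1-\tau_1)\theta$, $(2-\theta-\tau_1)\theta$, whose determinant is $\theta^2(1-\tau_1)[(2-\theta-\tau_1)-(1-\theta)^2(1-\tau_1)]$), so the sign of $\det G$ near $\sigma=1$ is governed by $(2-\theta-\tau_1)-(1-\theta)^2(1-\tau_1)$. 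A direct manipulation shows this equals $-[(1-\tau_1)\theta^2-(1-2\tau_1)\theta-1]$, which is strictly positive exactly because $\theta$ is below the root specified in \eqref{cond:theta}. Hence for $\sigma$ close enough to $1$, both $a(\sigma)>0$ and $\det G>0$, and also $\sigma>\tau_2$; by continuity one can then choose such a $\sigma\in[\tau_2,1[$.

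To make the choice of $\sigma$ clean rather than "close enough to $1$," I would carry the quadratic $\phi(\sigma):=\det G$ through explicitly, identify its value and derivative at $\sigma=1$, and argue that since $\phi(1)>0$ (shown above) and $\phi$ is continuous, there is an interval $]\sigma_1,1]$ on which $\phi>0$; intersecting with the region $a(\sigma)\ge0$, i.e.\ $\sigma\ge\sigma_0$, and with $[\tau_2,\infty)$ yields a nonempty subinterval of $[\tau_2,1[$ (nonempty because $\sigma_0<1$ and $\tau_2<1$). Any $\sigma$ in this subinterval works, since a symmetric $2\times2$ matrix with nonnegative diagonal and nonnegative determinant is positive semidefinite. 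The main obstacle I anticipate is the bookkeeping in expanding $a(\sigma)c(\sigma)-b(\sigma)^2$ and showing cleanly that the relevant coefficient reduces to the quadratic in $\theta$ appearing in \eqref{cond:theta}; this is elementary but error-prone, and getting the algebra to collapse exactly onto $(1-\tau_1)\theta^2-(1-2\tau_1)\theta-1$ is the crux that ties the hypothesis on $\theta$ to the conclusion.
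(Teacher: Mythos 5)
Your proposal is correct and follows essentially the same route as the paper's proof: evaluate $G$ at $\sigma=1$, where it factors as $\theta$ times a matrix whose positive definiteness reduces exactly to $(1-\tau_1)\theta^2-(1-2\tau_1)\theta-1<0$ (i.e.\ the condition \eqref{cond:theta}), then pull $\sigma$ back below $1$ by continuity and take the maximum with $\tau_2$. The only difference is that you carry out explicitly the determinant computation that the paper dismisses with ``it can be verified.''
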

\begin{proof} 
Note that the matrix $G$ in \eqref{matrixtheta>1} with $ \sigma=1$ reduces to 
\[
\theta\left[ 
\begin{array}{cc} 
 1-\tau_1&(1-\theta){(1-\tau_1)}\\
(1-\theta){(1-\tau_1)}& 2-\theta-\tau_1
\end{array} \right].
\]
Using \eqref{cond:theta} and $\tau_1, \tau_2 \in [0,1[$, it can be  verified that  the above matrix is positive definite. Hence, we conclude that  there exists $\hat \sigma \in [0,1[$ such that   $G$  is positive semidefinite for all $\sigma\in [\hat \sigma,1[$.  Therefore, the lemma follows by taking 
$\sigma= \max\{\tau_2,\hat \sigma\}.$
\qed
\end{proof}

In the following, we  show that the partially inexact proximal ADMM can be regarded as an instance of the modified HPE framework.

\begin{proposition}\label{maincorADMM1} 
Let $\{(x_k,y_k,\gamma_k)\}$  and $\{(\tilde x_k,\tilde{\gamma}_k)\}$ be generated by the partially inexact  proximal ADMM.
Let also $T$, $M$ and $d_0$   be
as in \eqref{FAB}, \eqref{def:matrixM1} and  \eqref{def:d0admm1},  respectively.
Define 
\begin{equation}\label{def:eta0}
z_0=(x_0,y_0,\gamma_0), \quad \mu=\frac{4[\sigma-1+(1-\tau_1)\theta]}{\theta^{3/2}}\max\left\{{1},\frac{\theta}{2-\theta}\right\},   \quad \eta_0=\mu d_0
\end{equation}
and, for all $k\geq 1$,
\begin{align} \label{i645}
 & z_k=(x_k,y_k,\gamma_k), \qquad  
	\tilde z_k=(\tilde{x}_k,y_k,\tilde \gamma_k),\\[2mm]
	&\eta_k= \frac{[\sigma-1+(2-\theta-\tau_1)\theta]}{\beta\theta^3}\|\gamma_{k}-\gamma_{k-1}\|^2+
\frac{[\sigma-1+(1-\tau_1)\theta]}{\theta}\|y_k-y_{k-1}\|_{H}^2,\label{etak}
\end{align}
where $\sigma \in [\tau_2,1[$ is  given by Lemma~\ref{pro:sigma}. Then,  $(z_k,\tilde{z}_k,\eta_k)$ satisfies the error condition in \eqref{breg-cond1} for every $k\geq 1$.  As a consequence,  the partially inexact  proximal ADMM is an instance of the modified HPE framework.
\end{proposition}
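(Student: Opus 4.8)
The plan is to verify the two conditions in \eqref{breg-cond1} for the triple $(z_k,\tilde z_k,\eta_k)$ defined in \eqref{i645}--\eqref{etak}. The inclusion $M(z_{k-1}-z_k)\in T(\tilde z_k)$ is already supplied by Lemma~\ref{lem:inclusion}, so the whole work is in establishing the key inequality
\[
\|\tilde z_k-z_k\|_M^2+\eta_k\le \sigma\|\tilde z_k-z_{k-1}\|_M^2+\eta_{k-1}\qquad (k\ge 1).
\]
First I would expand both $\|\tilde z_k-z_k\|_M^2$ and $\|\tilde z_k-z_{k-1}\|_M^2$ using the block structure of $M$ in \eqref{def:matrixM1}, substituting the expressions for $\tilde\gamma_k-\gamma_k$ and $\tilde\gamma_k-\gamma_{k-1}$ from Lemma~\ref{lem:deltak1}(a). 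This is exactly the computation carried out in \eqref{eq:deltakx}--\eqref{eq:deltaky} of the proof of Lemma~\ref{lem:deltak1}(b), but now done for a general index $k$ rather than $k=1$; the $x$-block contributes $\frac1\beta\|\tilde x_k-x_k\|^2$ and $\frac1\beta\|\tilde x_k-x_{k-1}\|^2$, the $y$-block contributes $\|y_k-y_{k-1}\|^2_{\beta B^*B+H}$ on the right, and the $\gamma$-block expands into terms in $\|\gamma_k-\gamma_{k-1}\|^2$, $\|B(y_k-y_{k-1})\|^2$ and the cross term $\langle B(y_k-y_{k-1}),\gamma_k-\gamma_{k-1}\rangle$.

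Next I would bound the $x$-terms using the inequality in \eqref{cond:inex} together with $x_k=x_{k-1}-\beta v_k$, which gives $\|\tilde x_k-x_k\|^2\le \tau_1\|\tilde\gamma_k-\gamma_{k-1}\|^2+\tau_2\|\tilde x_k-x_{k-1}\|^2$; since $\sigma\ge\tau_2$, the leftover $\tilde x$-terms are absorbed. After this substitution, the claimed inequality reduces to a quadratic-form inequality in the two "vectors" $\frac1{\sqrt\beta\,\theta^{3/2}}(\gamma_k-\gamma_{k-1})$ and $\sqrt\beta\,\theta^{1/2}B(y_k-y_{k-1})$ whose coefficient matrix is precisely the matrix $G$ of Lemma~\ref{pro:sigma} — this is where the specific definitions of $\eta_k$ in \eqref{etak} and of $\sigma$ via Lemma~\ref{pro:sigma} were reverse-engineered to make things close. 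The telescoping structure is handled by Lemma~\ref{lem:deltak1}(c), which controls the cross term $\frac1\theta\langle\gamma_k-\gamma_{k-1},B(y_k-y_{k-1})\rangle$ by a difference of consecutive $H$-seminorms plus a term involving $\gamma_{k-1}-\gamma_{k-2}$, so that the residual from iteration $k-1$ reappears inside $\eta_{k-1}$. For $k\ge 2$ this gives the inequality with $G\succeq 0$ doing the work; for the base case $k=1$ one uses Lemma~\ref{lem:deltak1}(b) to bound the stray term $\frac12\|y_1-y_0\|_H^2-\frac1{\sqrt\theta}\langle B(y_1-y_0),\gamma_1-\gamma_0\rangle$ by $2\max\{1,\theta/(2-\theta)\}d_0$, which is exactly what the definition of $\eta_0=\mu d_0$ in \eqref{def:eta0} is calibrated to absorb.

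The main obstacle is the bookkeeping in the $k=1$ case: one must check that the non-telescoping remainder, after all the $G\succeq 0$ cancellations, is dominated by $\eta_0-\eta_1\le\eta_0$, and this requires the explicit constant $\mu$ in \eqref{def:eta0} together with the estimate \eqref{eq:457} relating $\|z_1-z^*\|_M^2$ to $\|z_0-z^*\|_M^2$ (hence to $d_0$). Once \eqref{breg-cond1} holds for all $k\ge1$ with $\sigma\in[0,1[$ and the stated $M$, the final sentence — that the partially inexact proximal ADMM is an instance of the modified HPE framework — is immediate from the definition of that framework in Section~\ref{sec:smhpe}, since $\eta_k\ge 0$ follows from $G\succeq 0$ (both diagonal entries of $G$ are nonnegative) and $z_0,\eta_0,\sigma,M$ meet the Step~0 requirements.
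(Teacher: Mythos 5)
Your proposal follows essentially the same route as the paper's proof: expand the $M$-seminorms blockwise, absorb the $x$-terms via the relative error inequality in \eqref{cond:inex} together with $\sigma\geq\tau_2$, rewrite the $\gamma$-terms via Lemma~\ref{lem:deltak1}(a), and then close the argument with Lemma~\ref{lem:deltak1}(c) and the positive semidefinite matrix $G$ for $k\geq 2$, and with Lemma~\ref{lem:deltak1}(b) and the calibrated $\eta_0=\mu d_0$ for $k=1$. The only cosmetic difference is the exact scaling of the two vectors in the quadratic form (the paper uses $\sqrt{\beta}B(y_k-y_{k-1})$ and $(\gamma_{k-1}-\gamma_{k-2})/(\theta\sqrt{\beta})$ with an overall factor $1/\theta$), which does not change the argument.
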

\begin{proof} First of all, since the matrix $G$ in \eqref{matrixtheta>1} is positive semidefinite and $\sigma\in [\tau_2,1[$, we have
\begin{equation}\label{eq:3455}
[\sigma-1+(1-\tau_1)\theta]\geq [\sigma-1+(\sigma-\tau_1)\theta]=g_{11}\geq 0.
\end{equation}
Now, using \eqref{def:matrixM1} and definitions of $\{z_{k}\}$ and $\{\tilde z_k\}$ in \eqref{i645}, we obtain
\begin{equation*}
 \|\tilde z_k-z_{k-1}\|_{M}^2= \frac{1}{\beta}\|\tilde{x}_k-x_{k-1}\|^2+\|y_k-y_{k-1}\|_{H}^2+\beta\|B(y_k-y_{k-1})\|^2+\frac{1}{\beta\theta}\|\tilde \gamma_k-\gamma_{k-1}\|^2
\end{equation*}
and %
\begin{align*}
 \|\tilde z_k-z_k\|^2_{M}&= \frac{1}{\beta}\|\tilde{x}_k-x_{k}\|^2+\frac{1}{\beta\theta}\|\tilde \gamma_k-\gamma_k\|^2.
\end{align*} 
Hence,
\begin{align}\nonumber
\sigma\|\tz_{k}-{z_{k-1}}\|_{M}^2- \|{\tz}_k-{z_k}\|_{M}^2&= \frac{1}{\beta}\left(
{\sigma}\|\tilde{x}_k-x_{k-1}\|^2-\|\tilde{x}_k-x_{k}\|^2+{\tau_1}\|\tilde \gamma_k-\gamma_{k-1}\|^2\right)+\sigma\|y_k-y_{k-1}\|_{H}^2\label{eq:325}\\
&+\sigma\beta\|B(y_k-y_{k-1})\|^2+\frac{\sigma-\tau_1\theta}{\beta\theta}\|\tilde \gamma_k-\gamma_{k-1}\|^2-\frac{1}{\beta\theta}\|\tilde \gamma_k-\gamma_k\|^2.
\end{align}
Note that the inequality in \eqref{cond:inex}  and definition of $x_k$ in  \eqref{def:tyk-admm1} imply that 
 \begin{align*}
0&\leq  \tau_2\|\tilde x_k-x_{k-1}\|^2 - \|\tilde x_k-x_k\|^2+\tau_1\| \tilde{\gamma}_{k}-{\gamma}_{{k-1}} \|^2
  \end{align*}
which, combined with \eqref{eq:325} and the fact that $\sigma\geq \tau_2$, yields 
\begin{align}\nonumber
\sigma\|\tz_{k}-{z_{k-1}}\|_{M}^2- \|{\tz}_k-{z_k}\|_{M}^2&\geq\sigma\|y_k-y_{k-1}\|_{H}^2+ \sigma\beta\|B(y_k-y_{k-1})\|^2+\frac{\sigma-\tau_1\theta}{\beta\theta}\|\tilde \gamma_k-\gamma_{k-1}\|^2\label{eq:ah34}\\
&-\frac{1}{\beta\theta}\|\tilde \gamma_k-\gamma_k\|^2.
\end{align}
On the other hand, it follows from Lemma~\ref{lem:deltak1}(a) that
\begin{align*} 
\frac{\sigma-\tau_1\theta}{\beta\theta}&\|\tilde \gamma_k-\gamma_{k-1}\|^2-\frac{1}{\beta\theta}\|\tilde \gamma_k-\gamma_k\|^2\\&=\frac{\sigma-\tau_1\theta}{\beta\theta}\left\|\frac{1}{\theta}(\gamma_k-\gamma_{k-1})+\beta B(y_{k}-y_{k-1})\right\|^2 -\frac{1}{\beta\theta}\left\|\frac{1-\theta}{\theta}(\gamma_k-\gamma_{k-1})+\beta B(y_{k}-y_{k-1})\right\|^2\\
\nonumber
 &=\frac{\sigma-1+(2-\theta-\tau_1)\theta}{\beta\theta^3}\|\gamma_{k}-\gamma_{k-1}\|^2+\frac{(\sigma-1-\tau_1\theta)\beta}{\theta}\|B(y_{k}-y_{k-1})\|^2\\\nonumber
 &+\frac{2[\sigma-1+(1-\tau_1)\theta]}{\theta^2}\inner{\gamma_{k}-\gamma_{k-1}}{B(y_k-y_{k-1})}.
 \end{align*}
  Hence, combining the last equality and \eqref{eq:ah34}, we obtain 
\begin{align}\nonumber
\sigma\|\tz_{k}&-{z_{k-1}}\|_{M}^2- \|{\tz}_k-{z_k}\|_{M}^2\geq \sigma\|y_k-y_{k-1}\|_{H}^2+\frac{[\sigma-1+(2-\theta-\tau_1)\theta]}{\beta\theta^3}\|\gamma_{k}-\gamma_{k-1}\|^2\label{eq:e309} \\
&+\frac{[\sigma-1+(\sigma-\tau_1)\theta]\beta}{\theta}\|B(y_{k}-y_{k-1})\|^2
 +\frac{2[\sigma-1+(1-\tau_1)\theta]}{\theta^2}\inner{\gamma_{k}-\gamma_{k-1}}{B(y_k-y_{k-1})}.
\end{align}

We will now consider two cases: $k=1$ and $k>1$. 
\\[1mm]
Case 1 ($k=1$): It follows from \eqref{eq:e309} with $k=1$,  \eqref{eq:3455} and  Lemma~\ref{lem:deltak1}(b) that
\begin{align*}
\sigma\|\tz_{1}-{z_{0}}\|_{ M}^2-& \|{\tz}_1-{z_1}\|_{M}^2 \geq\frac{[\sigma-1+(2-\theta-\tau_1)\theta]}{\beta\theta^3}\|\gamma_{1}-\gamma_{0}\|^2+\frac{[\sigma-1+(\sigma-\tau_1)\theta]\beta}{\theta}\|B(y_{1}-y_{0})\|^2\\
& +\frac{[\sigma-1+(1-\tau_1)\theta+\sigma\theta^{3/2}]}{\theta^{3/2}}\|y_1-y_{0}\|_{H}^2-\frac{4[\sigma-1+(1-\tau_1)\theta]}{\theta^{3/2}}\max\left\{{1},\frac{\theta}{2-\theta}\right\} d_0
 \end{align*}
which, combined with definitions of $\eta_0$ and $\eta_1$, yields 
\begin{align}\nonumber
\sigma\|\tz_{1}-{z_{0}}\|_{ M}^2- \|{\tz}_1-{z_1}\|_{M}^2+\eta_0-\eta_1 &\geq\frac{[\sigma-1+(\sigma-\tau_1)\theta]}{\theta}\left(\beta\|B(y_{1}-y_{0})\|^2+\frac{1}{\sqrt{\theta}}\|y_1-y_{0}\|_{H}^2\right)\label{eq:1003}\\
& +\frac{[(1-\sigma)(1+\sqrt{\theta}-\theta)+\tau_1\theta]}{{\theta}}\|y_1-y_{0}\|_{H}^2.
 \end{align}
Using \eqref{cond:theta}, we have $\theta \in \, ]0,(1+\sqrt{5})/2[$ which in turn implies that $(1+\sqrt{\theta}-\theta)\geq 0$.
Hence, inequality \eqref{breg-cond1}  with $k=1$  follows from  \eqref{eq:3455}, \eqref{eq:1003}  and the fact that  $\sigma< 1$.
\\[2mm]
Case 2 ($k>1$): It follows from \eqref{eq:e309},  \eqref{eq:3455} and Lemma~\ref{lem:deltak1}(c) that
\begin{align*} 
\sigma\|\tz_{k}-{z_{k-1}}\|_{M}^2- \|{\tz}_k-{z_k}\|_{M}^2& \geq\frac{[\sigma-1+(2-\theta-\tau_1)\theta]}{\beta\theta^3}\|\gamma_{k}-\gamma_{k-1}\|^2+\frac{[\sigma-1+(\sigma-\tau_1)\theta]}{\theta}\beta\|B(y_{k}-y_{k-1})\|^2\\
&
 +\frac{2(1-\theta)[\sigma-1+(1-\tau_1)\theta]}{\theta^2}\inner{\gamma_{k-1}-\gamma_{k-2}}{B(y_k-y_{k-1})}\\
 & +\frac{[\sigma-1+(1-\tau_1)\theta]}{\theta}\left(\|y_k-y_{k-1}\|_{H}^2-\|y_{k-1}-y_{k-2}\|_{H}^2\right)
\end{align*}
which, combined with definition of $\{\eta_k\}$ in \eqref{etak}, yields 
\begin{align*}\nonumber
\sigma\|\tz_{k}&-{z_{k-1}}\|_{M}^2- \|{\tz}_k-{z_k}\|_{M}^2+\eta_{k-1}-\eta_{k} 
\geq\frac{[\sigma-1+(\sigma-\tau_1)\theta]\beta}{\theta}\|B(y_{k}-y_{k-1})\|^2\\
&+\frac{[\sigma-1+(2-\theta-\tau_1)\theta]}{\beta\theta^3}\|\gamma_{k-1}-\gamma_{k-2}\|^2+\frac{2(1-\theta)[\sigma-1+(1-\tau_1)\theta]}{\theta^2}\inner{\gamma_{k-1}-\gamma_{k-2}}{B(y_k-y_{k-1})}\\
&=\frac{1}{\theta} \; \left\langle G\left[ 
\begin{array}{c} 
\sqrt{\beta}B(y_k-y_{k-1})\\
 (\gamma_{k-1}-\gamma_{k-2})/\theta\sqrt{\beta}
\end{array} \right],\left[ 
\begin{array}{c} 
\sqrt{\beta}B(y_k-y_{k-1})\\
 (\gamma_{k-1}-\gamma_{k-2})/\theta\sqrt{\beta}
\end{array} \right]\right\rangle 
\end{align*}
where $G$ is as in \eqref{matrixtheta>1}. Therefore, since $G$ is positive semidefinite (see Lemma~\ref{pro:sigma}(b)), we conclude that  inequality \eqref{breg-cond1}  also holds for $k>1$.
To end the proof,  note that the last statement  of the proposition follows  trivially from the first one and  Lemma~ \ref{lem:inclusion}.
\qed
\end{proof}

We are now ready to present our main results of this paper, namely, we establish pointwise and ergodic 
 iteration-complexity bounds  for the partially inexact proximal ADMM.
\begin{theorem} \label{th:pointwise} 
Consider the sequences $\{(x_k,y_k,\gamma_k)\}$ and $\{(\tilde x_k,\tilde \gamma_k)\}$  generated by the partially inexact  proximal ADMM.
Then,  for every $k\geq 1$, 
\begin{equation}\label{eq:th_incADMMtheta1} 
\left( 
\begin{array}{c} 
\frac{1}{\beta}(x_{k-1}-x_k)\\[1mm]  
(H+\beta B^*B)(y_{k-1}-y_k)\\[1mm]  
\frac{1}{\beta\theta}(\gamma_{k-1}-\gamma_k)
\end{array} 
\right) \in 
\left[ 
\begin{array}{c} 
\partial f(\tilde{x}_k)- A^*\tilde{\gamma}_k\\[1mm]  
\partial g(y_k)- B^*\tilde{\gamma}_k\\[1mm]  
A\tilde{x}_k+By_k-b
\end{array} \right]
\end{equation}
and   there exist  $\sigma\in ]0,1[$ and  $i\leq k$ such that
\[
\left( \frac{1}{\beta} \|x_{i-1}-x_i\|^2+\|y_{i-1}-y_i\|_{(H+\beta B^*B)}^2+\frac{1}{\beta\theta}\norm{\gamma_{i-1}-\gamma_i}^2\right)^{1/2}\leq \frac{\sqrt{d_0}}{\sqrt{k}} \sqrt{\frac{2(1+\sigma)+4\mu }{1-\sigma}}
\]
where  $d_0$  and $\mu$ are  as in  \eqref{def:d0admm1} and \eqref{def:eta0}, respectively. 
\end{theorem}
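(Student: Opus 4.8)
The plan is to derive Theorem~\ref{th:pointwise} directly from the pointwise iteration-complexity bound for the modified HPE framework (Theorem~\ref{th:pointwiseHPE}), using the identification of the partially inexact proximal ADMM as an instance of that framework established in Proposition~\ref{maincorADMM1}. The first displayed inclusion \eqref{eq:th_incADMMtheta1} is exactly the combination of \eqref{aux.01}, \eqref{aux.21} and \eqref{aux.11} from Lemma~\ref{lem:inclusion}, so that part requires nothing more than quoting that lemma (or, equivalently, writing out $M(z_{k-1}-z_k)\in T(\tilde z_k)$ with $M$ and $T$ as in \eqref{def:matrixM1} and \eqref{FAB} and reading off the three block components).

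For the quantitative estimate, the idea is as follows. By Proposition~\ref{maincorADMM1}, the triples $(z_k,\tilde z_k,\eta_k)$ with $z_k=(x_k,y_k,\gamma_k)$, $\tilde z_k=(\tilde x_k,y_k,\tilde\gamma_k)$, and $\eta_k$ given by \eqref{etak}, together with $\eta_0=\mu d_0$ and $\sigma\in[\tau_2,1[$ from Lemma~\ref{pro:sigma}, satisfy the modified HPE error condition \eqref{breg-cond1}. Hence Theorem~\ref{th:pointwiseHPE} applies: for every $k\ge 1$ there is $i\le k$ with
\[
\|z_{i-1}-z_i\|_M\le \frac{1}{\sqrt{k}}\sqrt{\frac{2(1+\sigma)\,\tilde d_0+4\eta_0}{1-\sigma}},
\]
where $\tilde d_0=\inf\{\|z^*-z_0\|_M^2:z^*\in T^{-1}(0)\}$. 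Since $T^{-1}(0)=\Omega^*$ and $z_0=(x_0,y_0,\gamma_0)$, this $\tilde d_0$ is precisely $d_0$ of \eqref{def:d0admm1}. Substituting $\eta_0=\mu d_0$ gives the bound $\sqrt{d_0/k}\cdot\sqrt{(2(1+\sigma)+4\mu)/(1-\sigma)}$ on $\|z_{i-1}-z_i\|_M$. It then only remains to expand $\|z_{i-1}-z_i\|_M^2$ using the block-diagonal form \eqref{def:matrixM1}: this gives exactly $\frac1\beta\|x_{i-1}-x_i\|^2+\|y_{i-1}-y_i\|^2_{(H+\beta B^*B)}+\frac1{\beta\theta}\|\gamma_{i-1}-\gamma_i\|^2$, matching the left-hand side of the claimed inequality. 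Finally, one notes $\sigma\in[\tau_2,1[\subset]0,1[$ (or simply $\sigma\in]0,1[$ since $\sigma\ge\tau_2\ge 0$ and, if $\sigma=0$, the bound is still valid, though one can just record $\sigma\in[0,1[$ as in the HPE statement) to conclude.

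There is essentially no obstacle here: the proof is a bookkeeping exercise that strings together Lemma~\ref{lem:inclusion}, Proposition~\ref{maincorADMM1}, and Theorem~\ref{th:pointwiseHPE}. The only point demanding any care is the identification of the two ``$d_0$''s — checking that the infimum defining $d_0$ in Theorem~\ref{th:pointwiseHPE}, when instantiated with $\Z=\X\times\Y\times\Gamma$, the operator $M$ of \eqref{def:matrixM1}, and the starting point $z_0=(x_0,y_0,\gamma_0)$, coincides verbatim with \eqref{def:d0admm1} — and the straightforward expansion of the $M$-seminorm into its three diagonal blocks. I would write the proof in three short steps: (1) invoke Lemma~\ref{lem:inclusion} for \eqref{eq:th_incADMMtheta1}; (2) invoke Proposition~\ref{maincorADMM1} to get the HPE setting with $\eta_0=\mu d_0$; (3) apply Theorem~\ref{th:pointwiseHPE} and expand $\|z_{i-1}-z_i\|_M$ via \eqref{def:matrixM1} to obtain the stated estimate.
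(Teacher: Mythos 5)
Your proposal is correct and follows exactly the paper's route: the paper's own proof of this theorem is the one-line statement that it ``follows by combining Proposition~\ref{maincorADMM1} and Theorem~\ref{th:pointwiseHPE}'', with the inclusion \eqref{eq:th_incADMMtheta1} coming from Lemma~\ref{lem:inclusion} and the estimate from substituting $\eta_0=\mu d_0$ and expanding $\|z_{i-1}-z_i\|_M$ via the block-diagonal form \eqref{def:matrixM1}. Your write-up simply makes explicit the bookkeeping (identification of the two $d_0$'s, the range of $\sigma$) that the paper leaves implicit.
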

\begin{proof} This result follows by combining
Proposition~\ref{maincorADMM1} and Theorem~\ref{th:pointwiseHPE}.
\qed
\end{proof}
\begin{remark}\label{rem:admmpoint} 
For a given tolerance $\bar \rho>0$, Theorem~\ref{th:pointwise}  ensures  that in at most $\mathcal{O}(1/\bar\rho^2)$ iterations, the partially inexact proximal ADMM  provides an approximate solution $\tilde z:=(\tilde x, y, \tilde \gamma)$  of \eqref{FAB} together with a residual $r:=(r_x,r_y,r_{\gamma})$ in the sense that
\[
\frac{1}{\beta}r_x\in\partial f(\tilde{x})- A^*\tilde{\gamma}, \qquad (H+\beta B^*B)r_y\in \partial g(y)- B^*\tilde{\gamma}, \qquad  \frac{1}{\beta\theta}r_{\gamma}= A\tilde{x}+By-b, \quad \|(r_x,r_y,r_\gamma)\|_M\leq \bar \rho,
\]
where  $M$ is as in \eqref{def:matrixM1}. 
Note that the above relations are equivalent to $M r\in T(\tilde z)$ and $\|r\|_M\leq \bar \rho$ with $T$  as in \eqref{FAB}. 
\end{remark}

\begin{theorem}\label{th:ergodicproximal ADMM}
Let the sequences $\{(x_k,y_k,\gamma_k)\}$ and $\{(\tilde x_k,\tilde \gamma_k)\}$ be  generated by the partially inexact  proximal ADMM.
Consider the ergodic sequences
$\{(x^a_k,y^a_k,\gamma^a_k)\}$, $\{(\tilde x^a_k,\tilde\gamma^a_k)\}$, $\{( r^{a}_{k,x},r^{a}_{k,y},r^{a}_{k,\gamma})\}$ and 
$\{(\varepsilon_{k,x}^a,\varepsilon_{k,y}^a)\}$ defined by
\begin{equation}\label{eq:jase12}
 (x_k^a,y_k^a,\gamma_k^a)=\frac1k\sum_{i=1}^k\left( x_i, y_i,\gamma_i  \right), \qquad
  (\tilde x_k^a,\tilde\gamma_k^a)=\frac1k\sum_{i=1}^k\left( \tilde{x}_i,\tilde\gamma_i \right),\qquad
  ( r^{a}_{k,x},r^{a}_{k,y},r^{a}_{k,\gamma})=\frac{1}{k}\sum_{i=1}^k\left(r_{i,x},r_{i,y},r_{i,\gamma}  \right),
\end{equation}
\begin{equation}\label{eq:at09}
(\varepsilon^a_{k,x},\varepsilon^a_{k,y})= \frac{1}{k}\sum_{i=1}^k\left(\Inner{r_{i,x}/\beta+A^{*}\tilde\gamma_i}{\tilde{x}_i-\tilde{x}_k^a}, \Inner{\left(H+\beta B^*B\right)r_{i,y}+B^{*}\tilde\gamma_i}{y_i-y_k^a}\right)
\end{equation}
where 
\begin{equation}\label{eq:291} 
(r_{i,x},r_{i,y},r_{i,\gamma})=\left(x_{i-1}-x_{i}, y_{i-1}-y_{i}, \gamma_{i-1}-\gamma_{i}\right).
\end{equation}
Then, for every $k\geq 1$, we have $\varepsilon^a_{k,x}, \varepsilon^a_{k,y} \geq 0$, 
\begin{equation}\label{eq:th_incADMMtheta<1} 
\left( 
\begin{array}{c} 
\frac{1}{\beta} r_{k,x}^{a}\\[1mm]  
(H+\beta B^*B)r_{k,y}^{a}\\[1mm]  
\frac{1}{\beta\theta}r_{k,\gamma}^{a}
\end{array} 
\right) \in 
\left[ 
\begin{array}{c} 
\partial_{\varepsilon^a_{k,x}} f(x_k^a)- A^*\tilde{\gamma}_k^a\\[1mm]  
\partial_{\varepsilon^a_{k,y}} g(y_k^a)- B^*\tilde{\gamma}_k^a\\[1mm]  
Ax_k^a+By_k^a-b,
\end{array} \right],
\end{equation}  
and  there exists  $\sigma\in ]0,1[$  such that
\begin{equation}\label{eq:ad45}
  \left(\frac{1}{\beta}\|r_{k,x}^{a}\|^2+\|r_{k,y}^a\|_{(H+\beta B^*B)}^2+\frac{1}{\beta\theta}\norm{r_{k,\gamma}^a}^2\right)^{1/2}\le 
  \frac{2\sqrt{(1+\mu) d_0}}{k}
    \end{equation}
 and
\begin{equation}\label{eq:ad451}
 \varepsilon^a_{k,x} + \varepsilon^a_{k,y} \leq \frac{ 3(1+\mu)(3-2\sigma) d_{0} }{2(1-\sigma)k}
    \end{equation}
where  $d_0$  and $\mu$ are  as in  \eqref{def:d0admm1} and \eqref{def:eta0}, respectively.  
\end{theorem}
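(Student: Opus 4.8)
The plan is to derive this theorem as a direct consequence of Proposition~\ref{maincorADMM1}, the ergodic bound for the modified HPE framework (Theorem~\ref{th:ergHPE}), and the transportation formula (Theorem~\ref{for:trans}). By Proposition~\ref{maincorADMM1}, the sequence of triples $(z_k,\tilde z_k,\eta_k)$ from \eqref{def:eta0}--\eqref{etak} --- with $z_k=(x_k,y_k,\gamma_k)$, $\tilde z_k=(\tilde x_k,y_k,\tilde\gamma_k)$ and $\eta_0=\mu d_0$ --- is generated by the modified HPE framework applied to $T$ and $M$ as in \eqref{FAB} and \eqref{def:matrixM1}. Hence Theorem~\ref{th:ergHPE} applies with $d_0+\eta_0=(1+\mu)d_0$, giving $\varepsilon^a_k\ge 0$, $\|r^a_k\|_M\le 2\sqrt{(1+\mu)d_0}/k$ and $\varepsilon^a_k\le 3(3-2\sigma)(1+\mu)d_0/[2(1-\sigma)k]$, where $r^a_k=\frac1k\sum_{i=1}^k(z_{i-1}-z_i)=(r^a_{k,x},r^a_{k,y},r^a_{k,\gamma})$ coincides with the sequence in \eqref{eq:jase12}--\eqref{eq:291}. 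Expanding $\|r^a_k\|_M^2$ through the block-diagonal structure of \eqref{def:matrixM1}, namely $\|r^a_k\|_M^2=\frac1\beta\|r^a_{k,x}\|^2+\|r^a_{k,y}\|_{H+\beta B^*B}^2+\frac1{\beta\theta}\|r^a_{k,\gamma}\|^2$, then yields \eqref{eq:ad45}.

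For the inclusion \eqref{eq:th_incADMMtheta<1} the point is that the $\varepsilon$-enlargement $T^{[\varepsilon^a_k]}(\tilde z^a_k)$ of the block operator is weaker than the componentwise $\varepsilon$-subdifferential statement one wants, so the transportation formula must be invoked separately on each block. Lemma~\ref{lem:inclusion} gives, for every $i$, $r_{i,x}/\beta+A^*\tilde\gamma_i\in\partial f(\tilde x_i)$ and $(H+\beta B^*B)r_{i,y}+B^*\tilde\gamma_i\in\partial g(y_i)$, together with $\frac1{\theta\beta}r_{i,\gamma}=A\tilde x_i+By_i-b$. Applying Theorem~\ref{for:trans}(b) to $\partial f$ with the pairs $\bigl(\tilde x_i,\ r_{i,x}/\beta+A^*\tilde\gamma_i\bigr)_{i=1}^k$ produces $\varepsilon^a_{k,x}\ge 0$ --- which is exactly the first coordinate of \eqref{eq:at09} --- and $\frac1\beta r^a_{k,x}+A^*\tilde\gamma^a_k\in\partial_{\varepsilon^a_{k,x}}f(\tilde x^a_k)$; doing the same on $\partial g$ with the pairs $\bigl(y_i,\ (H+\beta B^*B)r_{i,y}+B^*\tilde\gamma_i\bigr)_{i=1}^k$ gives $\varepsilon^a_{k,y}\ge 0$ and $(H+\beta B^*B)r^a_{k,y}+B^*\tilde\gamma^a_k\in\partial_{\varepsilon^a_{k,y}}g(y^a_k)$. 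The third (affine) row of \eqref{eq:th_incADMMtheta<1} follows by averaging \eqref{aux.11} over $i=1,\dots,k$.

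It remains to establish \eqref{eq:ad451}, and for this the crucial observation is the identity $\varepsilon^a_{k,x}+\varepsilon^a_{k,y}=\varepsilon^a_k$, after which \eqref{eq:ad451} is immediate from the bound on $\varepsilon^a_k$ above. To prove the identity I would expand $\langle M(z_{i-1}-z_i),\tilde z_i-\tilde z^a_k\rangle$ coordinatewise using \eqref{def:matrixM1}, so that $k\varepsilon^a_k$ splits into $x$-, $y$- and $\gamma$-parts; the $x$- and $y$-parts are precisely the $r_{i,x}/\beta$- and $(H+\beta B^*B)r_{i,y}$-contributions appearing inside $k(\varepsilon^a_{k,x}+\varepsilon^a_{k,y})$, so that $k(\varepsilon^a_{k,x}+\varepsilon^a_{k,y})-k\varepsilon^a_k=\sum_i\langle\tilde\gamma_i,(A\tilde x_i+By_i)-(A\tilde x^a_k+By^a_k)\rangle-\sum_i\frac1{\theta\beta}\langle r_{i,\gamma},\tilde\gamma_i-\tilde\gamma^a_k\rangle$. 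Substituting $\frac1{\theta\beta}r_{i,\gamma}=A\tilde x_i+By_i-b$ and using $\sum_i(\tilde\gamma_i-\tilde\gamma^a_k)=0$ and $A\tilde x^a_k+By^a_k=\frac1k\sum_j(A\tilde x_j+By_j)$, both sums telescope and the difference vanishes.

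The main obstacle is exactly this last cancellation: one must carefully track the cross term contributed by the third ($\gamma$) block of $M$ and check that, once $r_{i,\gamma}$ is replaced by $\theta\beta(A\tilde x_i+By_i-b)$ and the ergodic means are telescoped, it precisely offsets the $A^*\tilde\gamma_i$- and $B^*\tilde\gamma_i$-terms that the transportation formula injects into $\varepsilon^a_{k,x}$ and $\varepsilon^a_{k,y}$. Everything else --- the block expansion of the $M$-seminorm, the substitution $d_0+\eta_0=(1+\mu)d_0$, and the averaging of \eqref{aux.01}--\eqref{aux.11} --- is routine; here $\sigma\in\,]0,1[$ is the constant supplied by Lemma~\ref{pro:sigma} through Proposition~\ref{maincorADMM1}.
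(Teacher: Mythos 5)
Your proposal is correct and follows essentially the same route as the paper: invoke Proposition~\ref{maincorADMM1} together with Theorem~\ref{th:ergHPE} (with $d_0+\eta_0=(1+\mu)d_0$), apply the transportation formula blockwise to $\partial f$ and $\partial g$, and verify the identity $\varepsilon^a_k=\varepsilon^a_{k,x}+\varepsilon^a_{k,y}$ by substituting $r_{i,\gamma}/(\theta\beta)=A\tilde x_i+By_i-b$ and using that the ergodic deviations sum to zero --- which is exactly the cancellation the paper carries out. Your version of the inclusion, stated at $\tilde x^a_k$ rather than $x^a_k$, is in fact what the transportation formula delivers and what the paper's own proof (and subsequent remark) actually use.
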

\begin{proof}
By  combining  Proposition~\ref{maincorADMM1} and Theorem~\ref{th:ergHPE}, 
we conclude that inequality \eqref{eq:ad45} holds, and
\begin{equation}\label{ine:eps:pr}
\varepsilon_k^a \leq \frac{ 3(1+\mu)(3-2\sigma) d_{0} }{2(1-\sigma)k},
\end{equation} 
where 
\begin{equation}\label{ine:eps124523}
\varepsilon_k^a=\dfrac{1}{k}\left(\sum_{i=1}^k\,\Inner{r_{i,x}/\beta}{\tilde{x}_i-\tilde{x}_k^a}+
\sum_{i=1}^k\,\Inner{\left(H+\beta B^*B\right)r_{i,y}}{y_i-y_k^a}+
\sum_{i=1}^k\,\Inner{r_{i,\gamma}/(\theta\beta)}{\tilde \gamma_i-\tilde \gamma_k^a}\right)
\end{equation}
On the other hand, \eqref{aux.11},  \eqref{eq:jase12} and \eqref{eq:291}  yield
\begin{align*}
 A\tilde{x}_k+By_k=\frac{1}{\theta\beta}r_{k,\gamma}+b,\quad  A\tilde{x}^a_k+By^a_k=\frac{1}{\theta\beta}r^a_{k,\gamma}+b. 
 \end{align*}
Additionally,   it follows from definitions of $r_{i,\gamma}$ and $r^a_{k,\gamma}$ that 
\begin{align*}
 \frac{1}{k} \sum_{i=1}^k\inner{\tilde \gamma_i}{r_{i,\gamma}-r_{k,\gamma}^a}=
 \frac{1}{k} \sum_{i=1}^k\inner{\tilde \gamma_i-\tilde \gamma_k^a}{r_{i,\gamma}-r_{k,\gamma}^a}=
 \frac{1}{k} \sum_{i=1}^k\inner{\tilde \gamma_i-\tilde \gamma_k^a}{r_{i,\gamma}}.
 \end{align*}
Hence, combining  the identity in \eqref{ine:eps124523} with the last two  equations, we have
\begin{align}\nonumber
\varepsilon_k^a &=\dfrac{1}{k}\sum_{i=1}^k\,\Big(\Inner{r_{i,x}/\beta}{\tilde{x}_i-\tilde{x}_k^a}+
\Inner{\left(H+\beta B^*B\right)r_{i,y}}{y_i-y_k^a}\Big)+
\dfrac{1}{k}\sum_{i=1}^k\Inner{\tilde \gamma_i}{\left(r_{i,\gamma}-r^a_{k,\gamma}\right)/\left(\theta\beta\right)}\\\nonumber
 &=\dfrac{1}{k}\sum_{i=1}^k\,\Big(\inner{r_{i,x}/\beta}{\tilde{x}_i-\tilde{x}_k^a}+
\Inner{\left(H+\beta B^*B\right)r_{i,y}}{y_i-y_k^a}+\Inner{\tilde \gamma_i}{A\tilde{x}_i-A\tilde{x}_k^a+By_i-By_k^a}\Big)\\\nonumber
&=\frac{1}{k}\sum_{i=1}^k \Inner{r_{i,x}/\beta+A^*\tilde \gamma_i}{\tilde{x}_i-\tilde{x}_k^a}+
\frac{1}{{k}}\sum_{i=1}^k \Inner{\left(H+\beta B^*B\right)r_{i,y}+B^*\tilde \gamma_i}{y_i-y_k^a} =\varepsilon_{k,x}^a+\varepsilon_{k,y}^a,
\end{align}
where the last equality is due to the definitions of $\varepsilon_{k,x}^a$ and $\varepsilon_{k,y}^a$  in \eqref{eq:at09}.
Therefore, the inequality in \eqref{eq:ad451} follows trivially from   the last equality and  \eqref{ine:eps:pr}. 

To finish the proof 
of the theorem, note that direct use of Theorem \ref{for:trans}(b) (for $f$ and $g$), \eqref{eq:th_incADMMtheta1}--\eqref{eq:291} 
give $\varepsilon_{k,x}^a,\,\varepsilon_{k,y}^a\geq 0$ and the inclusion in \eqref{eq:th_incADMMtheta<1}.
\qed
\end{proof}
\begin{remark} 
For a given tolerance $\bar \rho>0$, Theorem~\ref{th:ergodicproximal ADMM} ensures  that in at most $\mathcal{O}(1/\bar\rho)$ iterations, the partially inexact proximal ADMM  provides, in the ergodic sense, an approximate solution $\tilde z:=(\tilde x^a, y^a, \tilde \gamma^a)$  of \eqref{FAB} together with  residues $r:=(r_x^a,r_y^a,r_{\gamma}^a)$ and $(\varepsilon_{x}^a, \varepsilon_{y}^a)$ such that 
\[
\frac{1}{\beta}r_x^a\in\partial_{\varepsilon_{x}^a} f(\tilde{x}^a)- A^*\tilde{\gamma}^a, \;\; (H+\beta B^*B)r_y^a\in \partial_{\varepsilon_{y}^a} g(y^a)- B^*\tilde{\gamma}^a, \;\;  \frac{1}{\beta\theta}r_{\gamma}^a= A\tilde{x}^a+By^a-b, \quad \|(r_x^a,r_y^a,r_{\gamma}^a)\|_M\leq \bar \rho,
\]
where  $M$ is as in \eqref{def:matrixM1}.    The above ergodic complexity bound  is better than  the pointwise one   by  a factor of $\mathcal{O}(1/\bar \rho)$; however, the  above inclusion  is, in general,  weaker than that of the pointwise case due to   the $\varepsilon$-subdifferentials of the $f$ and $g$ instead of the subdifferentials. 
\end{remark}


\section{Numerical Experiments} \label{sec:exp}

In this section, we report some numerical  experiments to illustrate the performance of the partially inexact proximal ADMM (PIP-ADMM) on two classes of problems, namely, LASSO  and $L _1-$regularized logistic  regression. 
Our  main goal  is to show that, in some applications,  the method performs better with a stepsize parameter $\theta > 1$ instead of  the choice $\theta=1$ as considered in the related literature.
Similarly to  \cite{Eckstein2017App,Eckstein2017Relat},  we also used a hybrid inner stopping criterion for the PIP-ADMM, i.e.,  the inner-loop terminates  when $v_k$ satisfies either the inequality in~\eqref{cond:inex} or  $\|v_k\|\leq 10^{-8}$.   This strategy is motivated by the fact that, close to approximate solutions, the former condition seems to be more restrictive than the latter. We set $\tau_1= 0.99(1+\theta-\theta^2)/(\theta(2-\theta))$,  $\tau_2=1-10^{-8}$
and $H=0$.
For a comparison purpose, we also run  \cite[Algorithm~2]{Eckstein2017App}, denoted here by  relerr-ADMM; see  Remark~\ref{remark23}(iv) for more details on the relationship between the PIP-ADMM and the  relerr-ADMM.
As suggested by \cite{Eckstein2017App},  the  error tolerance parameter $\tau_1$ in~\eqref{er:EC} was taken equal to   $0.99$.
 For all tests, both algorithms used the initial point  $(x_0,y_0,\gamma_0)=(0,0,0)$, the penalty parameter $\beta=1$, and stopped when the following condition was satisfied
\[
\|(x_k-x_{k-1},y_k-y_{k-1},\gamma_k-\gamma_{k-1})\|_{M}\leq 10^{-2},
\]
where $M$ is as in \eqref{def:matrixM1}.  
The computational results were obtained using MATLAB R2015a on
 a 2.4GHz Intel(R) Core~i7  computer with 8GB of RAM.

 \subsection{LASSO Problem} \label{sub_lasso}
We consider to approximately solve the LASSO  problem \cite{10.2307/2346178,tibshirani2013}
  \begin{equation*} 
\min_{x \in \R^n}  \frac{1}{2}\|Cx-d\|^2+\delta\|x\|_1 
\end{equation*}
 where $C \in \R ^{m \times n}$, $d\in \R ^m$, and  $\delta$ is  a regularization
parameter. We set $\delta=0.1\|C^*d\|_{\infty}$.  By introducing a new variable,  we can rewrite the above problem as
  \begin{equation} \label{optl245}
\min \left\{ \frac{1}{2}\|Cx-d\|^2+\delta\|y\|_1: \; y-x=0,  \;x\in \R^n, y\in \R ^{n} \right\}.
\end{equation}
Obviously,  \eqref{optl245} is an instance of \eqref{optl} with $f(x)=({1}/{2})\|Cx-d\|^2$, $g(y)=\delta\|y\|_1$, $A=-I$, $B=I$ and $b=0$. Note that, in this case, the pair $(\tilde x_k,\tilde v_k)$ in 
\eqref{cond:inex} can be obtained by computing  an approximate solution  $\tilde x_k$ with a residual $\tilde v_k$  of the following  linear system
 \[
 (C^*C+\beta I) x=  (C^*d+\beta y_{k-1}-\gamma_{k-1}).
 \]
For approximately solving the above  linear system, we used the conjugate gradient method \cite{nocedal2006numerical} with starting point $C^*d+\beta y_{k-1}-\gamma_{k-1}$. Note also that subproblem  \eqref{def:tyk-admm1} has a closed-form solution
 \[
 y_k=\mbox{shrinkage}_{\delta/\beta}\left({\tilde{x}_k +\gamma_{k-1}/\beta}\right),
 \]
 where the shrinkage operator is defined as 
\begin{equation}\label{shrin}
\mbox{shrinkage}_{\kappa}:\R^n\to\R^n, \quad (\mbox{shrinkage}_{\kappa}(a))_i=\mbox{sign}(a_i)\max(0,|a_i|-\kappa) \quad i=1,2,\ldots,n,
\end{equation}
with   sign$(\cdot)$  denoting  the sign function.

We first tested the  methods for solving 3 randomly generated LASSO problem instances.   
  For a given dimension $m \times n$, we generated a random   matrix $C$  and   scaled its  columns to have unit $l_2$-norm. The vector $d\in \R^m$ was chosen as $d = Cx + \sqrt{0.001}y$,  where the $(100/n)-$sparse  vector  $x\in \R^n$   and  the noisy vector $y\in \R^m$ were also generated randomly.

%

\begin{table}[h]\caption{Performance of the relerr-ADMM and PIP-ADMM to solve 3 randomly generated LASSO problems.}
\vspace{.1 in}
\resizebox{\textwidth}{!}{ 
\begin{tabular}{|c|ccc|ccc|ccc|ccc|}  \hline  
{Dim. of $A$} & \multicolumn{3}{|c|}{relerr-ADMM}&\multicolumn{3}{|c|}{PIP-ADMM ($\theta =1$)} &\multicolumn{3}{|c|}{PIP-ADMM ($\theta =1.3$)} &
\multicolumn{3}{|c|}{PIP-ADMM ($\theta =1.6$)} \\ \cline{2-13}
$m\times n$        &Out  &Inner &Time &Out  &Inner  &Time  &Out  &Inner  &Time  &Out  &Inner  &Time \\\hline     
$900 \times 3000$  &26   &195   &11.1 &26   &195    &10.2  &22   &169    &8.8   &19   &172    &7.9  \\
$1200\times 4000$  &26   &193   &22.7 &26   &193    &20.9  &21   &155    &20.9  &19   &169    &17.9 \\ 
$1500\times 5000$  &25   &185   &40.9 &25   &185    &36.7  &21   &158    &34.0  &18   &159    &29.3 \\ \hline 
\end{tabular}}
\end{table}

We also tested the methods on five standard data sets from the Elvira  biomedical data set repository \cite{ELVIRA}. 
The first data set is  the colon tumor gene expression \cite{alon1999broad} with $m=62$ and $n=2000$, the second is the central nervous system (CNS) data \cite{pomeroy2002prediction} with $m=60$ and $n=7129$,  the third is the  prostate cancer data \cite{singh2002gene} with $m=102$ and $n=12600$,  the fourth is  the Leukemia cancer-ALLMLL data  \cite{Golub531} with $m=38$ and $n=7129$, and the fifth is the lung cancer-Michigan data \cite{beer2002gene}  with $m=96$ and $n=7129$. As in the randomly generated problems, we scaled the columns of  $C$ in order to have unit $l_2$-norm.
\begin{table}[h]\caption{Performance of the relerr-ADMM and PIP-ADMM on 5 data sets.}
\vspace{.1 in}
\resizebox{\textwidth}{!}{ 
\begin{tabular}{|l|ccc|ccc|ccc|ccc|}  \hline  
\multicolumn{1}{|c|}{\multirow{2}{*}{Data set}}& \multicolumn{3}{c|}{ relerr-ADMM}&\multicolumn{3}{c|}{PIP-ADMM ($\theta =1$)} 
&\multicolumn{3}{c|}{PIP-ADMM ($\theta =1.3$)} &\multicolumn{3}{c|}{PIP-ADMM ($\theta =1.6$)} \\ \cline{2-13}
                       &Out  &Inner &Time &Out  &Inner  &Time  &Out  &Inner  &Time  &Out  &Inner  &Time \\\hline     
                       Colon &87 &1535 &11.9 &87 &1517 &11.9 &78 &1378 &10.8 &72 &1390 &10.2 \\
CNS &204 &5979 &466.6 &204 &5967 &467.1 &179 &5293 &425.7 &164 &5267 &383.5 \\  
Prostate &368 &16176 &3523.5 &366 &16030 &3502.6 &298 &13212 &2791.2 &252 &12319 &2642.4 \\ 
Leukemia &415 &7435 &813.3 &415 &7435 &811.6 &347 &6290 &674.2 &297 &5710 &591.4 \\
Lung  &485 &10975 &1008.6 &485 &10949 &1023.4  &379 &8612 &805.6 &314 &7736 &679.1 \\ \hline 
\end{tabular}}
\end{table}

The performances of the relerr-ADMM and PIP-ADMM are listed in Tables  1 and 2, in which ``Out" and ``Inner" denote the number of iterations and the total number of inner iterations  of the methods, respectively,  whereas ``Time" is the CPU time in seconds. From these tables, we see that  the relerr-ADMM and the PIP-ADMM with $\theta=1$ had similar performances. However,  the PIP-ADMM with $\theta=1.3$ and $\theta=1.6$ clearly outperformed    the relerr-ADMM.




\subsection{$L _1-$regularized Logistic Regression }


Consider the  $L _1-$regularized logistic regression problem  \cite{koh2007interior} 
 \begin{equation*} 
\min_ {(u,t) \in \R^n \times \R}\left\{  \sum_{i=1}^{m} \log\left(1+\exp\left(-d_i[\inner{c_i}{u}+t]\right)\right)+\delta m\Norm{u}_1 \right\},
\end{equation*}
 where $(c_i,d_i)\in \R^n\times \{-1,+1\}$, for every $i=1,\dots,m$, and $\delta$ is  a regularization parameter. We set  $\delta = 0.5\lambda_{\mbox{max}}$, where  $\lambda_{\mbox{max}}$ is  defined as in \cite[Subsection 2.1]{koh2007interior}.
 Note that the above problem can be rewritten as 
\begin{equation} \label{optl23}
\min_{(x,u,t)\in \R^{n+1}\times\R^n\times\R}\left\{ \sum_{i=1}^{m}\log\left(1+\exp\left(-d_i\Inner{(1,c_i)}{x}\right)\right)+
\delta m \Norm{u}_1\, :\; (u,t)-x=0\right\},
\end{equation}
which is an instance of \eqref{optl} with $f(x)=\sum_{i=1}^{m}\log\left(1+\exp\left(-d_i\Inner{(1,c_i)}{x}\right)\right)$, 
$g(y)=g(u,t)= m\delta\|u\|_1$, $A=-I$, $B=I$, and $b=0$.   
 In this case, the pair $(\tilde x_k,\tilde v_k)$ in 
\eqref{cond:inex} was obtained as follows:
 the iterate $\tilde x_k$ was computed by the Newton method \cite{nocedal2006numerical} with starting point equal to $(0,\ldots,0)$,    as an approximate solution of the following unconstrained optimization problem 
 \[
\min_{x \in \R^{n+1}} \left \{h(x)=\sum_{i=1}^{m}\log\left(1+\exp\left(-d_i\Inner{(1,c_i)}{x}\right)\right) + \inner{x} {{\gamma}_{k-1}}+
\frac{\beta}{2} \|  y_{k-1} -x \|^2\right\}, 
 \]
 whereas $\tilde v_k$  was taken as  $\tilde v_k=\nabla h(\tilde x_k)$. 
 Note that \eqref{def:tyk-admm1}  has a closed-form solution $y_k=( u_k,t_k)$ given by
 \[
 u_k=\mbox{shrinkage}_{m\delta/\beta}\left({\tilde{x}^u_k +\gamma^u_{k-1}/\beta}\right), \quad t_k=\tilde{x}^t_k+\gamma^t_{k-1}/\beta,
 \]
where $\tilde{x}^u_k, \gamma^u_k \in \R^n$ and $\tilde{x}^t_k, \gamma^t_k \in \R^t$ are the components of the vectors $\tilde{x}_k$ and $\gamma_k$, i.e.,  $(\tilde{x}^u_k,\tilde{x}^t_k)=\tilde{x}_k$ and  $(\gamma^u_k,\gamma^t_k)=\gamma_k$, and the operator  shrinkage is  as  in   \eqref{shrin}.

We tested the methods for solving seven $L _1-$ regularized logistic regression problem instances. We selected four instances of  Section~\ref{sub_lasso},
and  three  from  the ICU Machine Learning Repository \cite{Dua:2017}, namely, the ionosphere data \cite{sigillito1989classification} with $m=351$ and $n=34$, the secom data with $m=1567$ and $n=590$, and the spambase data with $m=4601$ and $n=57$.
We also scaled  the columns (resp. rows)  of  $C=[c_1,\ldots,c_n]^*$ to have unit $l_2$-norm when $n\geq m$ (resp.  $m>n$).

Tables~3  reports the performances of  the relerr-ADMM and PIP-ADMM for solving the aforementioned seven instances of the problem \eqref{optl23}.  In Table  3,  ``Out" and ``Inner" are the number of  iterations and the total of inner iterations of the methods, respectively,  whereas ``Time" is the CPU time in seconds. 
Similarly  to the numerical results of Section~\ref{sub_lasso}, we observe that the relerr-ADMM and the PIP-ADMM with $\theta=1$ had similar performances, whereas the
PIP-ADMM with $\theta=1.3$ and $\theta=1.6$  outperformed   the relerr-ADMM.
Therefore, the efficiency of the PIP-ADMM for solving real-life applications is illustrated.


\begin{table}[h]\caption{ Performance of the relerr-ADMM and PIP-ADMM on 7 data sets.}
\vspace{.1 in}
\resizebox{\textwidth}{!}{ 
\begin{tabular}{|l|ccc|ccc|ccc|ccc|}  \hline  
\multicolumn{1}{|c|}{\multirow{2}{*}{Data set}}& \multicolumn{3}{c|}{relerr-ADMM}&\multicolumn{3}{c|}{PIP-ADMM ($\theta =1$)} 
&\multicolumn{3}{c|}{PIP-ADMM ($\theta =1.3$)} &\multicolumn{3}{c|}{PIP-ADMM ($\theta =1.6$)} \\ \cline{2-13}
                       &Out  &Inner &Time &Out  &Inner  &Time  &Out  &Inner  &Time  &Out  &Inner  &Time \\\hline     
CNS &153 &753 &6545.3 &153 &753 &6797.9 &128 &630 &6298.5 &113 &564 &5357.8 \\  
Colon &149 &596 &172.2 &149 &596 &180.5 &125 &500 &150.5 &110 &464 &139.0 \\
Leukemia &139 &693 &6264.4 &139 &693 &6248.8 &120 &592 &5203.9 &112 &563 &4951.9 \\
Lung &225 &1333 &11676.9 &225 &1333 &11354.4 &219 &1304 &10910.7 &215 &1321 &11152.5 \\
Ionosphere &54 &208 &0.2 &54 &208 &0.2 &42 &162 &0.2 &35 &142 &0.1 \\ 
Secom &21 &122 &15.0 &21 &121 &15.0 &17 &97 &13.5 &15 &89 &12.4 \\ 
Spambase &47 &212 &29.7 &47 &212 &29.8 &37 &168 &25.7 &30 &147 &22.4 \\ \hline 
\end{tabular}}
\end{table}

\section{Conclusions}

In this paper, we proposed a partially inexact proximal ADMM  and established  pointwise and ergodic iteration-complexity bounds for it. 
The proposed  method allows its first subproblem  to be solved inexactly using a relative approximate criterion, whereas a stepsize parameter is added  in the updating rule of the Lagrangian multiplier in order to  improve its computational  performance.
We presented some computational results illustrating  the numerical advantages of  the method.

\bibliographystyle{spmpsci_unsrt}

\def\cprime{$'$}

\end{document}